\def\mod{\ \mathrm{mod}\ }
\newcommand{\eqref}[1]{(\ref{#1})}
\newtheorem{thmm}{Theorem}
\newtheorem{cor}{Corollary}
\newtheorem{theorem}[thmm]{Theorem}
\newtheorem{lem}{Lemma}
\newtheorem{lemma}[lem]{Lemma}
\newtheorem{prop}{Proposition}
\newtheorem{proposition}[prop]{Proposition}
\newtheorem{conjecture}{Conjecture}
\newtheorem{problem}{Problem}
\def\tG{{\tilde G}}
\def\Gr{\operatorname{Gr}}
\def\Inv{\operatorname{Inv}}
\def\Z{\mathbb{Z}}
\def\id{\mathrm{id}}
\def\af{\mathrm{af}}
\def\tA{{\tilde A}}
\def\tF{{\tilde F}}
\def\tH{\tilde H}
\def\tY{\tilde Y}
\def\tX{\tilde X}
\def\tS{\tilde S}
\def\deg{\operatorname{deg}}
\newcommand%de1 #&#
\def\Prob{\operatorname{Prob}}
\def\sp{\operatorname{span}}
\def\pp{{\mathbf{p}}}
\def\i{{\mathbf{i}}}
\def\tW{{\hat{W}}}
\def\ha{{\hat{\alpha}}}
\begin{document}
\begin{frontmatter}

%\dochead{}
\title{The shape of a random affine Weyl group element and
random core partitions}
\runtitle{Random affine Weyl group elements}

\begin{aug}
\author[A]{\fnms{Thomas}~\snm{Lam}\corref{}\ead[label=e1]{tfylam@umich.edu}\thanksref{T1}}
%\and
\thankstext{T1}{Supported by
NSF Grants DMS-06-52641 and DMS-09-01111, and by a Sloan Fellowship.}
%\author{\fnms{}~\snm{}}
\runauthor{T. Lam}
\affiliation{University of Michigan}
%\dedicated{}
\address[A]{Department of Mathematics\\
University of Michigan\\
Ann Arbor, Michigan 48109\\
USA\\
\printead{e1}} %adresu isvedimo komanda gale!
%\address{}
\end{aug}

% HISTORY:
\received{\smonth{12} \syear{2012}}
\revised{\smonth{1} \syear{2014}}
%\accepted{\smonth{} \syear{}}

% ABSTRACT
%
\begin{abstract}
Let $W$ be a finite Weyl group and $\tW$ be the corresponding affine
Weyl group. We show that a large element in $\tW$, randomly generated
by (reduced) multiplication by simple generators, almost surely has one
of $|W|$-specific shapes. Equivalently, a reduced random walk in the
regions of the affine Coxeter arrangement asymptotically approaches one
of $|W|$-many directions. The coordinates of this direction, together
with the probabilities of each direction can be calculated via a Markov
chain on $W$.

Our results, applied to type $\tilde A_{n-1}$, show that a large random
$n$-core obtained from the natural growth process has a limiting shape
which is a piecewise-linear graph. In this case, our random process is
a periodic analogue of TASEP, and our limiting shapes can be compared
with Rost's theorem on the limiting shape of TASEP.
\end{abstract}

% KEYWORDS
% Pirmas kwd is didziosios raides
%
\begin{keyword}[class=AMS]
%\kwd[Primary ]{}
\kwd{60C05}
\kwd{60J10}
%\kwd[; secondary ]{}
\end{keyword}
\begin{keyword}
\kwd{Random partitions}
\kwd{Coxeter groups}
\kwd{TASEP}
\kwd{reduced words}
\kwd{core partitions}
%\kwd{}}.
\end{keyword}

\end{frontmatter}

%s1 #&#
\section{Introduction}
Let $W$ denote a finite Weyl group with root system $R$, and let $\tW$
denote the corresponding affine Weyl group, acting on a real vector
space~$V$. They are the most important and classical reflection groups.

%s1.1 #&#
\subsection{Random walks in the affine Coxeter arrangement}
The affine Coxeter arrangement of $W$ gives a regular tessellation of
$V$. Define a random walk $X = (X_0,X_1,\ldots)$ in the alcoves, called
the \textit{reduced random walk}. We start at the fundamental alcove and
at each step we cross one adjacent hyperplane chosen uniformly at
random, subject to the condition that we never cross a hyperplane
twice. See Figure~\ref{fig:walk}.

%f1 #&#
\begin{figure}

\includegraphics{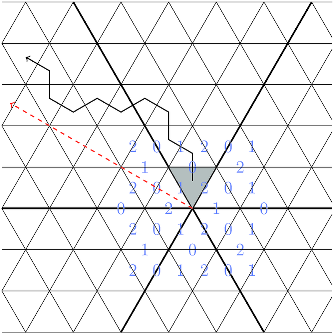}

\caption{A reduced random walk in the alcoves of the $\tilde A_2$
arrangement. The shown walk has reduced word $\cdots1020120210$. The
random walk will almost surely be asymptotically parallel to the red
dashed line. The thick lines divide $V$ into Weyl chambers.}
\label{fig:walk}
\end{figure}

This process is a transient Markov chain. More algebraically, it is
equivalent to a random infinite reduced word for $\tW$ obtained by
multiplying by simple generators one at a time, subject to the
condition that the length increases. Nonrandom infinite reduced words
in the affine Weyl group have a beautiful structure theory, which we
recently studied in relation to factorizations in loop groups \cite{LP}.
We prove here the following.
%Let $\pro V$ denote the projective space of $V$.

%th1 #&#
\begin{theorem}\label{T:main}
Let $(X_0,X_1,\ldots)$ be a reduced random walk in $\tW$. There exists
a unit vector $\psi\in V$ so that almost surely we have
%
%e1 #&#
\begin{equation}
\label{E:main} \lim_{N \to\infty}v(X_N) \in W \cdot\psi,
\end{equation}
where $v(X_i)$ denotes the unit vector pointing toward the central
point of $X_i$.
\end{theorem}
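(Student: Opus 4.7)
The plan is to exploit the semidirect-product decomposition $\tW = W \ltimes Q^\vee$ and reduce the asymptotics of $v(X_N)$ to an ergodic Markov chain on $W$. Writing $X_N = t_{\lambda_N} u_N$ with $\lambda_N \in Q^\vee$ and $u_N \in W$, the central point of the alcove $X_N$ equals $\lambda_N + u_N p_0$ (where $p_0$ is the center of $A_0$), so $v(X_N)$ is controlled by $\lambda_N/N$ once $|\lambda_N| \to \infty$.

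My first step is to show that the walk almost surely commits to a single Weyl chamber of $V$ in finite time. Since the reduced random walk crosses each hyperplane at most once and only the $|R^+|$ reflecting hyperplanes of $W$ pass through the origin, the walker crosses at most $|R^+|$ such hyperplanes in total. Hence there is a random finite time $T$ and a chamber $C_\infty$ such that $X_N$ lies in $C_\infty$ for every $N \geq T$; the choice of $C_\infty$ is random, and by symmetry of the setup each chamber is obtained from a specific sequence of early crossings.

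Next, using the length formula $\ell(t_\mu v) = \sum_{\alpha \in R^+} |\langle \mu, \alpha \rangle - \chi(v^{-1}\alpha \in R^-)|$, I would verify that once $\lambda_N$ is deep enough in $C_\infty$, the up-set $\{s_i : \ell(X_N s_i) > \ell(X_N)\}$ depends only on $u_N$ and on $C_\infty$: for finite $s_i$ the criterion reduces to a descent condition on $u_N$ (sign determined by $C_\infty$), while for $s_0$ it reduces to a sign condition on $\langle u_N \theta^\vee, \rho_{C_\infty}\rangle$. Consequently $(u_N)_{N \geq T}$ evolves as a Markov chain on $W$, sending $u \mapsto u s_k$ for finite $s_k$ and $u \mapsto u s_\theta$ (together with $\lambda \mapsto \lambda + u\theta^\vee$) for $s_0$, the generator being chosen uniformly from the up-set. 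After verifying irreducibility and aperiodicity, the unique stationary distribution $\pi_{C_\infty}$ together with the ergodic theorem yield
\[
\frac{\lambda_N}{N} \;\xrightarrow[N \to \infty]{\text{a.s.}}\; \Psi_{C_\infty} \;:=\; \sum_{u \in W} \pi_{C_\infty}(u)\, P_0(u)\, u\theta^\vee,
\]
where $P_0(u)$ is the probability of picking $s_0$ in state $u$. Thus $v(X_N) \to \Psi_{C_\infty}/|\Psi_{C_\infty}|$ almost surely, and by Weyl equivariance the Markov chains for different chambers are related by conjugation, so the limits $\{\Psi_C/|\Psi_C|\}_C$ form a single $W$-orbit; we may take $\psi := \Psi_{C_0}/|\Psi_{C_0}|$ for any fixed chamber $C_0$.

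The principal obstacle I foresee is establishing irreducibility of the Markov chain on $W$: one must show that any two elements of $W$ can be joined by a sequence of up-moves, a combinatorial claim about the interaction of the weak order on $W$ with the additional generator $s_\theta$ coming from $s_0$. The length-formula determination of the up-set is routine, though signs require care across chambers; a separate short argument, using $\ell(X_N) = N \to \infty$ together with the linearity of $\ell$ within each chamber, is needed to confirm $\Psi_{C_\infty} \neq 0$ so that the normalized limit $\Psi_{C_\infty}/|\Psi_{C_\infty}|$ is well-defined.
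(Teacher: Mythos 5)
Your overall strategy coincides with the paper's: commit to a Weyl chamber in finite time, project the tail of the walk onto the finite Weyl group, recognize the projection as an irreducible aperiodic Markov chain whose $s_0$-steps carry the increments $u\theta^\vee$, and apply the ergodic theorem to extract the drift; $W$-equivariance then collapses the chamber-by-chamber limits into a single orbit. Your conventions are the mirror image of the paper's (right multiplication and $x=t_\lambda u$ versus left multiplication and $x=wt_\lambda$), which is immaterial, and your drift formula agrees in direction with the paper's $\sum_{w:\,r_\theta w>w}\zeta(w)\,w^{-1}(\theta^\vee)$ once one accounts for the reweighting between your non-lazy chain and the paper's delayed one.

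There is, however, one genuine gap. You establish the determinism of the up-set --- and hence the Markov property of $(u_N)$ --- only \emph{once $\lambda_N$ is deep enough in $C_\infty$}: the reduction of $\ell(X_Ns_i)>\ell(X_N)$ to a descent condition on $u_N$ requires the pairings $\langle\lambda_N,\beta\rangle$ to avoid the boundary values $0$ (and, for the $s_0$-criterion, also $\pm 1$). So your argument needs the walk to almost surely leave every bounded neighborhood of every wall of $C_\infty$. Confinement to $C_\infty$ together with $\ell(X_N)=N\to\infty$ only gives $|\lambda_N|\to\infty$; it does not exclude the walk hugging a wall forever (e.g.\ remaining between $H_\alpha^{k}$ and $H_\alpha^{k+1}$, which is combinatorially possible for an infinite reduced word --- these are exactly the words braid-equivalent to powers of a translation along the wall). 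The natural way to see that wall-hugging has probability zero is as a \emph{consequence} of the ergodic analysis (the limit direction is regular), so as structured your argument is circular. The paper avoids this entirely: Lemma \ref{L:s0} proves the up-set criterion for \emph{every} affine Grassmannian $x=wt_\lambda$, including the boundary cases $\langle\lambda,\alpha\rangle\in\{0,-1\}$, so the projection is an honest Markov chain from time zero and no depth hypothesis is ever needed. To repair your proof you must either carry out the up-set computation uniformly over the whole chamber (this is precisely Lemma \ref{L:s0}, the longest computation in the paper and not quite ``routine''), or give a separate probabilistic argument excluding wall-hugging. Your two other deferred points are correctly identified and do work out: irreducibility is Proposition \ref{P:sca} (quoted from Hivert--Schilling--Thi\'ery), and the nonvanishing of the drift follows from the length formula of Lemma \ref{lem:Grass} as you suggest.
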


Thus the reduced walk has one of finitely many asymptotic directions.
The random walk we study here is different to the walks on hyperplane
arrangements that we have seen in the literature; see for example \cite
{BHR,BD}.

% \begin{figure}
% \caption{The six asymptotic directions for $\tilde A_2$, and their
%respective probabilities.}
% \end{figure}

%s1.2 #&#
\subsection{A remarkable Markov chain on $W$}
In Section~\ref{ss:W}, we define a Markov chain on the finite Weyl
group $W$. Roughly speaking, this Markov chain is obtained by
projecting the affine Grassmannian weak order onto $W$. Unlike the
reduced random walk on $\tW$, this Markov chain is irreducible and
aperiodic (Proposition~\ref{P:sca}), and thus has a unique invariant
distribution $\{\zeta(w) \mid w \in W\}$.

The vectors $W \cdot\psi$ lie in different Weyl chambers $C_w$, and we
let $X \in C_w$ denote the event that the reduced random walk $X$
eventually stays in $C_w$. The probabilities $\Prob(X \in C_w)$ vary
depending on $w$: in $\tilde A_4$, one Weyl chamber is 96 times more
likely than another.
The root system notation of the next theorem is reviewed in
Section~\ref{ss:notation}.

%th2 #&#
\begin{theorem}\label{T:main2}
The vector $\psi$ of Theorem~\ref{T:main} is given by
\[
\psi= \frac{1}{Z}\sum_{w \in W \dvtx r_\theta w > w} \zeta(w)
w^{-1}\bigl(\theta^\vee\bigr),
\]
where $\theta$ is the highest root of $W$ and $Z$ is a normalization
factor. Furthermore,
\[
\Prob(X \in C_w) = \zeta\bigl(w^{-1}w_0\bigr).
\]
%
%\Prob\left(\lim_{N \to%\infty}\sp(\lambda^{(N)}) = w \cdot\psi\right)
\end{theorem}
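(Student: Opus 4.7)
The plan is to identify the reduced random walk on $\tW$ with a walk whose finite-Weyl-component is the Markov chain on $W$ from Section \ref{ss:W}, and then to read off both formulas from the ergodic theorem together with a time-reversal argument. Using the semidirect product decomposition $\tW = W \ltimes Q^\vee$, write each alcove as $X_n = t_{\lambda_n} u_n$ with $u_n \in W$ and $\lambda_n$ in the coroot lattice. Right-multiplication by a finite simple reflection $s_i$ modifies only the $W$-component, whereas right-multiplication by the affine reflection $s_0$ both modifies $u_n$ (roughly to $u_n r_\theta$) and translates $\lambda_n$ by a vector proportional to $u_n^{-1}(\theta^\vee)$. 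Once $\lambda_n$ is deep enough inside a Weyl chamber, the length-increasing condition on $s_0$ depends only on $u_n$, and reduces to the combinatorial condition $r_\theta u_n > u_n$. This identifies $(u_n)$ with the Markov chain of Section \ref{ss:W}, whose unique invariant measure is $\zeta$.

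For $\psi$, I would apply the ergodic theorem to this chain. If $D_k$ denotes the displacement of the center of $X_k$ relative to $X_{k-1}$, then $D_k = 0$ unless $s_0$ is used in step $k$, in which case $D_k$ is a positive multiple of $u_k^{-1}(\theta^\vee)$. Averaging over the stationary distribution gives
\[
\frac{1}{n}\sum_{k=1}^{n} D_k \;\longrightarrow\; c \sum_{u \in W,\, r_\theta u > u}\zeta(u)\, u^{-1}(\theta^\vee),
\]
where the constant $c$ absorbs the transition probabilities to $s_0$ at each state. Combined with Theorem \ref{T:main}, which guarantees that the direction of $X_n$ converges along each realization, this identifies $\psi$ as the unit vector in the direction of the right-hand sum, with $Z$ its norm.

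The chamber-probability formula $\Prob(X \in C_w) = \zeta(w^{-1}w_0)$ I expect to be the main obstacle, since $\{X \in C_w\}$ is a tail event whose probability depends on the non-stationary initial condition $X_0 = \id$ and so is not determined by the ergodic theorem. The cleanest route is a time-reversal argument: reading the reduced walk backwards from infinity yields a process that locally matches the reverse chain on $W$ and whose configuration near the fundamental alcove encodes the chamber. Inversion of a reduced word in $\tW$ intertwines the $W$-label with composition by the long element $w_0$, so that the forward event $C_w$ corresponds to the reversed chain starting in state $w^{-1}w_0$; the limiting distribution of the reversed chain near the fundamental alcove is precisely $\zeta$, giving $\Prob(X \in C_w) = \zeta(w^{-1}w_0)$. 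Making this rigorous requires interpreting the tail of the walk as an incoming alcove process from infinity and identifying its law with the stationary law of the time-reversed Markov chain.
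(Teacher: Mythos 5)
Your argument for the formula for $\psi$ is essentially the paper's: project the walk to its finite Weyl component, observe that the translation part moves only on $s_0$-steps and then by a vector determined by the current finite component, identify the projected process with the Markov chain of Section \ref{ss:W}, and apply the ergodic theorem for edge counts. Two small cautions. First, with your decomposition $X_n = t_{\lambda_n}u_n$ and right multiplication the $s_0$-displacement comes out as a multiple of $u_n(\theta^\vee)$ rather than $u_n^{-1}(\theta^\vee)$; the stated formula corresponds to the paper's convention $x = w t_\lambda$ with left multiplication by simple generators, where $s_0 x = (r_\theta w)t_{\lambda - w^{-1}\theta^\vee}$. Second, the identification with the chain on $W$ is exact for affine Grassmannian elements (Lemma \ref{L:s0}), not merely valid ``once $\lambda_n$ is deep enough''; the paper proves the direction statement first for the walk confined to the fundamental chamber and only then transfers it to $X$ via the observation that $X$ eventually stays in one chamber.

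The second formula is where your proposal has a genuine gap. You correctly intuit that inversion of reduced words exchanges the chamber label $w$ with something involving $w^{-1}w_0$ (this is Lemma \ref{L:reg}: for regular $x \in C_w^v$ one has $x^{-1} \in C_{w_0wv^{-1}}^{v^{-1}}$), but the mechanism you propose --- interpreting the tail of the walk as an incoming process from infinity and matching its law with the stationary law of the time-reversed chain --- is not what carries the argument, and would be hard to make rigorous: as the paper stresses, the time-reversed walk is a very different process from the original, and ``the limiting distribution of the reversed chain near the fundamental alcove'' is an entrance-law assertion, not a stationarity assertion. The paper's actual engine is the exact finite-time identity $\Prob(\tX_N = x) = \Prob(\tX_N = x^{-1})$ (Lemma \ref{L:reverse}), obtained by writing $\Prob(\tX_N = x) = [T_x]\,\xi^N$ in the affine $0$-Hecke algebra with $\xi = \frac{1}{r}\sum_i T_i$ and noting that $\xi$ is fixed by the anti-automorphism $T_x \mapsto T_{x^{-1}}$; this is precisely where the associativity of the Demazure product enters. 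Combining that identity with the joint limit $\Prob(\tX_N \in C_w^v) \to \eta(w)\zeta(vw^{-1})$, with the fact that $\tX_N$ is asymptotically almost surely regular, and with Lemma \ref{L:reg}, one gets the functional equation $\eta(w)\zeta(vw^{-1}) = \eta(w_0wv^{-1})\zeta(w^{-1}w_0)$ for all $v,w$, which is solved uniquely among probability measures by $\eta(w) = \zeta(w^{-1}w_0)$ (sum over $v$). Without this Hecke-algebra identity, or an equally concrete substitute, the second half of your sketch does not close.
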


Thus, the invariant distribution $\zeta$ determines two apparently
unrelated quantities: the coordinates of the asymptotic directions, and
the probabilities of each direction. This surprising duality is
ultimately related to the associativity of the Demazure or monoidal
product in a Coxeter group. In Section~\ref{ss:Shi}, we give an
alternative formula for $\zeta(w)$, expressed as a calculation
involving a sum over the regions of the \textit{Shi arrangement} of $W$.
We also conjecture (Conjecture \ref{conj:A}) that in type $A$ the point
$\psi$ of Theorem~\ref{T:main} is in the same direction as $\rho
^\vee$.
In joint work with Williams \cite{LW}, we conjecture that a
multivariate generalization of this Markov chain on the symmetric group
has remarkable Schubert positivity properties. Some of these
conjectures have been established by Ayyer and Linusson \cite{AL} and
Linusson and Martin \cite{LM}.

%s1.3 #&#
\subsection{Random $n$-core partitions}
In the case of $W = A_{n-1}$, Theorem~\ref{T:main} applied to a random
reduced walk conditioned to remain in the fundamental Weyl chamber can
be interpreted in terms of $n$-core partitions. Recall that a Young
diagram is an $n$-core if no $n$-ribbon can be removed from it. Grow a
random $n$-core from the empty partition by randomly adding boxes to
the Young diagram, subject to the condition that the shape is always an
$n$-core. The notation in the following theorem is explained in
Section~\ref{sec:cores}.

%th3 #&#
\begin{theorem}\label{T:cores}
For each $n$, there exists a piecewise-linear curve $C_n$, so that for
each $\varepsilon,\delta> 0$, there exists an $M$ such that for every $N
> M$, we have
\[
\Prob \bigl(\bigl|D\bigl(\lambda^{(N)}\bigr) -C\bigr| > \delta \bigr) <
\varepsilon,
\]
where $D(\lambda^{(N)})$ is the diagram of a random $n$-core of degree $N$.
\end{theorem}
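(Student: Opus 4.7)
The strategy is to deduce Theorem~\ref{T:cores} from Theorem~\ref{T:main} via the classical bijection $\lambda \leftrightarrow w_\lambda$ between $n$-cores of degree $N$ and Grassmannian elements (minimal length coset representatives of $\tilde S_n/S_n$) of length $N$ in $\tilde S_n$. Under this bijection, adding a box to an $n$-core so that the result is again an $n$-core corresponds to a weak order cover $w_\lambda \lessdot w_{\lambda'}$; hence the random $n$-core growth process is precisely the random walk on the affine Grassmannian weak order, the very walk that Section~\ref{ss:W} projects onto the finite Markov chain on $W$ with invariant distribution $\zeta$.

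First I would establish the Grassmannian analogue of Theorem~\ref{T:main}: if $A_N$ denotes the alcove of $w_{\lambda^{(N)}}$, then almost surely $v(A_N) \to \psi$. Because all Grassmannian alcoves lie in the dominant Weyl chamber $C_e$, the limit direction in the orbit $W\cdot\psi$ is forced to be the representative $\psi \in C_e$, so there is no $W$-ambiguity. This follows from the same Markov-chain-on-$W$ reasoning underlying the proof of Theorem~\ref{T:main}. Because each step of the walk moves the alcove by $O(1)$, the center of $A_N$ therefore sits at $cN\psi + o(N)$ almost surely, for some constant $c>0$ determined by the arrangement.

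Next I would translate this linear drift in $V$ into the shape of the Young diagram. Writing $\tilde{S}_n \cong Q^\vee \rtimes S_n$, where $Q^\vee$ is the coroot lattice of type $A_{n-1}$, the translation part of $w_\lambda$ is a coroot whose coordinates in a basis of simple coroots record, via the standard abacus bijection, the multiplicities of the $n$ edge-types appearing in the boundary lattice path of $D(\lambda)$. Linear drift of $w_\lambda$ in direction $\psi$ thus produces linearly growing edge-multiplicities with ratios prescribed by $\psi$; after the area-$N$ rescaling used to define $D(\lambda^{(N)})$ in Section~\ref{sec:cores}, the boundary converges to a piecewise-linear curve $C_n$ with at most $n$ segments whose slopes are read off from the coordinates of $\psi$.

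The quantitative $(\epsilon,\delta,M)$ statement then follows by upgrading a.s.\ convergence of direction to an $o(N)$ concentration bound on the position of $w_{\lambda^{(N)}}$ about the ray $\R_{\geq 0}\,\psi$, using standard concentration for the finite, irreducible, aperiodic projected chain on $W$ (Proposition~\ref{P:sca}); the map from alcove centers in $V$ to rescaled diagram boundaries is Lipschitz in the relevant metric. The main obstacle is the explicit alcove-to-diagram dictionary: one must identify $C_n$ from $\psi$, verify its piecewise-linear structure, and prove the Lipschitz estimate that transfers convergence from $V$ to diagram space.
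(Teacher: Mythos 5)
Your plan is correct and follows essentially the same route as the paper: the random $n$-core process is identified with the affine Grassmannian reduced random walk via Proposition \ref{P:bij}, the directional limit is supplied by Theorem \ref{T:main} and Corollary \ref{C:main} (whose proof already treats the Grassmannian walk first, so the absence of $W$-ambiguity is built in), and your alcove-to-diagram dictionary is exactly the paper's Lemma \ref{L:slope}, which reads off slopes $(n-i)/i$ between diagonals determined by the coordinates of the translation part. The one cosmetic difference is that the paper obtains the quantitative $(\epsilon,\delta,M)$ statement simply because almost-sure convergence implies convergence in probability, so the concentration estimates you invoke for the projected chain are not needed (and note that pinning the rescaled position, not just the direction, uses the length formula $\ell(wt_\lambda)=-\ip{\lambda,2\rho}-\ell(w)$ of Lemma \ref{lem:Grass} rather than only the $O(1)$-per-step bound).
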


%f2 #&#
\begin{figure}

\includegraphics{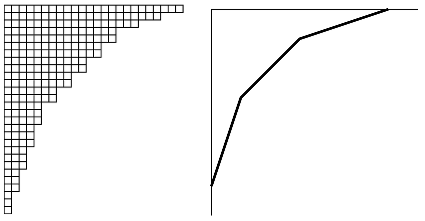}

\caption{A large random $4$-core, and the piecewise-linear curve $C_4$.}
\label{fig:core}
\end{figure}

%\begin{figure}[h]
%%
%\begin{center}
%\includegraphics{f02.eps}
%%\begin{tikzpicture}
%%\node at (-2,1.3) {$
%%\tabll{
%%%%{}&{}&{}&{}&{}&{}&{}&{}&{}&{}&{}&{}&{}&{}&{}&{}&{}&{}&{}&{}&{}&{}&{}&{}
%%\\
%%{}&{}&{}&{}&{}&{}&{}&{}&{}&{}&{}&{}&{}&{}&{}&{}&{}&{}&{}&{}&{}\\
%%%{}&{}&{}&{}&{}&{}&{}&{}&{}&{}&{}&{}&{}&{}&{}&{}&{}&{}&{}\\
%%%{}&{}&{}&{}&{}&{}&{}&{}&{}&{}&{}&{}&{}&{}&{}&{}&{}&{}&{}\\
%%%{}&{}&{}&{}&{}&{}&{}&{}&{}&{}&{}&{}&{}&{}&{}&{}&{}\\
%%{}&{}&{}&{}&{}&{}&{}&{}&{}&{}&{}&{}&{}&{}&{}&{}&{}&{}\\
%%{}&{}&{}&{}&{}&{}&{}&{}&{}&{}&{}&{}&{}&{}&{}\\
%%{}&{}&{}&{}&{}&{}&{}&{}&{}&{}&{}&{}&{}&{}&{}\\
%%{}&{}&{}&{}&{}&{}&{}&{}&{}&{}&{}&{}&{}\\
%%{}&{}&{}&{}&{}&{}&{}&{}&{}&{}&{}&{}&{}\\
%%{}&{}&{}&{}&{}&{}&{}&{}&{}&{}&{}\\
%%{}&{}&{}&{}&{}&{}&{}&{}&{}&{}&{}\\
%%{}&{}&{}&{}&{}&{}&{}&{}&{}\\
%%{}&{}&{}&{}&{}&{}&{}&{}&{}\\
%%{}&{}&{}&{}&{}&{}&{}\\
%%{}&{}&{}&{}&{}&{}&{}\\
%%{}&{}&{}&{}&{}\\
%%{}&{}&{}&{}&{}\\
%%{}&{}&{}&{}&{}\\
%%{}&{}&{}&{}\\
%%{}&{}&{}&{}\\
%%{}&{}&{}&{}\\
%%{}&{}&{}\\
%%{}&{}&{}\\
%%{}&{}&{}\\
%%{}&{}\\
%%{}&{}\\
%%{}&{}\\
%%{}\\
%%{}\\
%%{}
%%}$};
%%\draw[very thick] (0,0)--(0.5,1.5)--(1.5,2.5)--(3,3);
%%\draw[thin](0,-0.5)--(0,3)--(3.5,3);
%%\end{tikzpicture}
%\end{center}
%%
%
%\end{figure}

Conjecture \ref{conj:A} (verified for $n
\leq6$)\setcounter{footnote}{1}\footnote{Ayyer
and Linusson \cite{AL2} have reported that they have established
Conjecture \ref{conj:A}.} gives explicit coordinates for the curve $C_n$ (see Figure \ref{fig:core}).

There is a growth model on partitions naturally obtained from TASEP on
the integer lattice \cite{Joh,Ros}, where initially the negative
integers are all occupied by balls/particles and the nonnegative
integers are all vacant. The\vadjust{\goodbreak} particles jump toward the right into
adjacent vacant spaces. Our growth process on $n$-cores corresponds to
a periodic analogue of TASEP: now particles that are distance $n$ apart
are conditioned to jump together. As explained in Section~\ref
{sec:cores}, after appropriate scaling (and assuming Conjecture \ref
{conj:A}), the limit curve $C_n$ of Theorem~\ref{T:cores} approaches,
in the limit $n \to\infty$, the degree 2 curve which is the limit
shape of TASEP with exponential waiting time \cite{Ros}.

%s1.4 #&#
\subsection{(Co)homology of the affine Grassmannian}
In this project, we were initially motivated by the study of families
of symmetric functions which represent Schubert classes in the
($K$)-cohomology of the affine Grassmannian $\Gr_{\mathrm{SL}(n)}$ of $\mathrm{SL}(n)$
\cite{LamSP,LSS}. These symmetric functions, called $k$-Schur functions
and affine Stanley symmetric functions, are ``affine'' analogues of
Schur functions, the latter playing a key role in the theory of
Schur-measure and Plancherel-measure random partitions. In a similar
manner, the symmetric functions mentioned above give rise to
Plancherel-like measures on $n$-cores. These measures are however
distinct from the random growth processes studied in this paper.

%Theorem~\ref{T:cores} puts our work in the context of Vershik and
%Kerov's \cite{VK} work. They show that the shape of a
%Plancherel-measure random partition approaches a limit shape which is
%a continuous curve. This result is related to a diverse number of
%subjects, such as the expected length of an increasing subsequence of
%a random permutation (Ulam's problem), the asymptotic representation
%theory of the infinite symmetric group, and the distribution of the
%eigenvalues of random (GUE) Hermitian matrices. It follows (Corollary~\ref{cor:firstrow}) from Conjecture \ref{conj:A} that the first row of
%a large typical random $n$-core is asymptotic to $\frac{\sqrt{3}(n-1)}{
%\sqrt{n^2-1}}\sqrt{N}$, where $N$ is the number of boxes in the
%$n$-core. This is less than the value $2\sqrt{N}$ \cite{LoSh,VK} in
%the random partition case. Note however that our measure on $n$-cores
%is \textit{not} the analogue of Plancherel measure (see Section~\ref{ss:plancherel}).

Instead, our main result may have an interpretation in terms of large
products $\xi^N \in K_*(\Gr_{\mathrm{SL}(n)})$ of an element $\xi$ in the
$K$-homology of the affine Grassmannian---it describes the asymptotics
of the ``spreading out'' over the affine Grassmannian of products of
this class under the Pontryagin multiplication of a loop group (see
Section~\ref{ss:Grass}).

This connection to the infinite-dimensional geometry of $\Gr_{\mathrm{SL}(n)}$
has concrete probabilistic consequences: in a separate article, we plan
to apply this geometry to the calculation of the boundary of the affine
Grassmannian weak order.

%s2 #&#
\section{Walks in the affine Coxeter arrangement and reduced words}

%s2.1 #&#
\subsection{Affine Weyl groups}
\label{ss:notation}
For affine Weyl groups, we use the references \cite{Hum,Kac}.

We denote the simple generators of $W$ by $\{s_i \mid i \in I\}$ and by
$w_0$ the longest element of $W$. Let $s_0$ be the additional simple
generator of $\tW$. The Weyl\vadjust{\goodbreak} group acts as linear reflections in a real
vector space $V$, and the affine Weyl group act as affine reflections
in $V$. We let $\ell\dvtx W \to\Z$ and $\ell\dvtx \tW\to\Z$ denote the length
functions.

We let $R \subset V^*$ denote the set of roots of $W$, and let $R = R^+
\sqcup R^-$ denote the decomposition into positive and negative roots.
The set $R_\af$ of affine roots consists of the elements $\{\alpha+ n
\delta\mid\alpha\in R \mbox{ and } n \in\Z\} \cup\{n\delta
\mid n \in\Z-\{0\}\}$. The roots $\ha= \alpha+ n\delta$ are the real
affine roots, and $\ha$ is positive (resp., negative) if and only if
either (a) $\alpha\in R^+$ and $n \geq0$ (resp., $\alpha\in R^+$ and
$n < 0$), or (b) $\alpha\in R^-$ and $n > 0$ (resp., $\alpha\in R^-$
and $n \leq0$). We denote the positive affine roots by $R_\af^+$ and
the negative affine roots by $R_\af^-$. The simple roots are denoted
$\{
\alpha_i \mid i \in I \cup\{0\}\}$, and we have $\alpha_0 = \delta
-\theta$, where $\theta$ is the highest root. We let $r_\theta$ denote
the reflection in the hyperplane perpendicular to $\theta$.

To each real affine root $\ha= \alpha+ k \delta$, we associate the
(affine) hyperplane $H_\ha= H_\alpha^k = \{v \in V \mid\langle
v,\alpha \rangle
=-k\}$. The \textit{affine Coxeter arrangement} is the hyperplane
arrangement consisting of all such $H_\ha$. We also associate to each
real affine root $\ha$ a coroot $\ha^\vee$. The connected components of
the complement of affine Coxeter arrangement are known as alcoves. The
fundamental alcove $A^\circ$ is bounded by the hyperplanes
corresponding to the simple roots. There is a bijection $x \mapsto A_x$
between the alcoves and $\tW$, and we shall pick conventions so that
$A_{s_ix}$ and $A_x$ are adjacent, separated by the hyperplane
corresponding to $x^{-1} \cdot\alpha_i$. The \textit{Weyl chambers} are
the connected components of the complement to the finite Coxeter
arrangement, where only the $H_\alpha$'s are used for $\alpha\in R$.
The \textit{fundamental chamber} is the Weyl chamber containing the
fundamental alcove. Affine Weyl group elements corresponding to alcoves
inside the fundamental chamber are called \textit{affine Grassmannian}. We
shall also need the right action $w\dvtx A_x \mapsto A_{xw^{-1}}$ of $W$ on
the set of alcoves. The right action of $w^{-1}$ takes the fundamental
chamber to the Weyl chamber $C_w$ labeled by $w$ (the one containing
the alcove $A_w$). The elements in $C_w$ are of the form $xw$, where
$x$ is an affine Grassmannian element.

There is an isomorphism $\tW= W\times Q^\vee$, where $Q^\vee$ denotes
the coroot lattice of $W$. If $\lambda\in Q^\vee$, we denote by
$t_\lambda\in\tW$ the corresponding element in $\tW$, called a
translation element. For $x =w t_\lambda\in\tW$, we have
%
%e2 #&#
\begin{equation}
\label{E:affineaction} w t_\lambda\cdot(\alpha+ n\delta) = w\alpha+ \bigl(n-\langle
\lambda,\alpha \rangle\bigr)\delta.
\end{equation}
The inversions $\Inv(x) \subset R_\af^+$ of $x$ are exactly the real
affine roots which are sent to negative roots. Equivalently, $\Inv(x)$
consists of the roots corresponding to hyperplanes separating $A_x$
from $A^\circ$. Note that with these conventions, $A_{t_\lambda}$ is
obtained from $A^\circ$ by translation by the vector $-\lambda$. The
\textit{left weak order} on $\tW$ is given by $x \preceq x'$ if and only
if $\Inv(x) \subseteq\Inv(x')$. We shall also write $A \preceq A'$ for
the weak order applied to alcoves, and write $A \lessdot A'$ for the
cover relations. We say that an alcove $A$ is \textit{of type $w$} if $A =
A_{w t_\lambda}$.

Let $\rho= \frac{1}{2}\sum_{\alpha\in R^+} \alpha$ be the half-sum of
positive roots. Recall that $\lambda\in Q^\vee$ is antidominant if
$\langle\lambda,\alpha \rangle\leq0$ for $\alpha\in R^+$. The
following result is
standard \cite{LLMS,LamSP}.

%le1 #&#
\begin{lem}\label{lem:Grass}
Suppose $x = w t_\lambda$. Then $x$ is affine Grassmannian if and only if
$\lambda$ is antidominant and for every $\alpha\in R^+$ such that $w
\alpha
\in R^-$ we have $\langle\lambda,\alpha \rangle < 0$. We then have
$\ell(x) =
-\langle\lambda,2\rho \rangle - \ell(w)$.
\end{lem}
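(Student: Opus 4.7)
The plan is to convert both claims into statements about the inversion set $\Inv(x)\subset R_\af^+$ and then use the explicit action formula \eqref{E:affineaction}. The fundamental chamber is cut out from $V$ by the finite reflecting hyperplanes $H_\alpha$ for $\alpha\in R^+$, which are precisely the hyperplanes $H_{\alpha+0\delta}$. Hence $A_x$ lies in the fundamental chamber if and only if no finite positive root $\alpha\in R^+$ belongs to $\Inv(x)$.

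By \eqref{E:affineaction}, for $\alpha\in R^+$ we have $x\cdot\alpha = w\alpha - \ip{\lambda,\alpha}\delta$, which is a negative affine root exactly when either (a) $w\alpha>0$ and $\ip{\lambda,\alpha}>0$, or (b) $w\alpha<0$ and $\ip{\lambda,\alpha}\ge 0$. Negating both possibilities simultaneously for all $\alpha\in R^+$ yields exactly the two conditions in the lemma: $\ip{\lambda,\alpha}\le 0$ whenever $w\alpha>0$ (which combined with the next case is anti-dominance) and the strict inequality $\ip{\lambda,\alpha}<0$ whenever $w\alpha<0$.

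For the length formula, assume these conditions and count $|\Inv(x)|$ by grouping the positive affine roots according to finite part $\pm\alpha$ with $\alpha\in R^+$. A direct application of \eqref{E:affineaction} to $\alpha+n\delta$ ($n\ge 0$) and to $-\alpha+n\delta$ ($n\ge 1$) shows that under the anti-dominance hypothesis no root with finite part $+\alpha$ is an inversion, while the roots with finite part $-\alpha$ contribute $-\ip{\lambda,\alpha}$ inversions if $w\alpha>0$ and $-\ip{\lambda,\alpha}-1$ inversions if $w\alpha<0$. Summing over $\alpha\in R^+$ gives $\ell(x) = |\Inv(x)| = \sum_{\alpha\in R^+}(-\ip{\lambda,\alpha}) - \#\{\alpha\in R^+ : w\alpha<0\} = -\ip{\lambda,2\rho}-\ell(w)$.

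The proof is essentially mechanical once \eqref{E:affineaction} is available; the only genuine obstacle is careful bookkeeping of strict versus non-strict inequalities, which is precisely what produces the $-1$ shift in case (b) of the counting and thereby the $-\ell(w)$ correction in the length formula. I would organize the case analysis as a small table indexed by the signs of $w\alpha$ and of $\ip{\lambda,\alpha}$ to keep the argument transparent.
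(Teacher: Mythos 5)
Your proof is correct and complete: the characterization of $\Inv(x)$ via \eqref{E:affineaction}, the sign analysis for $n=0$ roots giving the Grassmannian criterion, and the count of $-\ip{\lambda,\alpha}$ versus $-\ip{\lambda,\alpha}-1$ inversions with finite part $-\alpha$ according to the sign of $w\alpha$ all check out against the paper's conventions (inversions are the positive affine roots sent to negative roots by $x$, and the fundamental chamber is cut out by the $H_\alpha^0$). The paper itself gives no proof, citing the result as standard from [LLMS, LamSP], so there is nothing to compare against; your argument is the standard one, and the only nitpick is that excluding inversions with finite part $+\alpha$ uses the full Grassmannian hypothesis (to rule out $w\alpha<0$, $\ip{\lambda,\alpha}=0$, $n=0$), not anti-dominance alone as your phrasing suggests.
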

%
% \begin{pf}
% The element $x$ is Grassmannian if and only if $x \alpha> 0$ for
% any $\alpha\in R^+$. But $x \alpha= w\alpha- \ip{\lambda,\alpha}
% \delta$ which is a positive affine root if and only if (1)
% $\ip{\lambda,\alpha} \leq0$ for every $\alpha\in R$, and (2)
% whenever $w\alpha< 0$ we have $\ip{\lambda,\alpha} < 0$. The final
%statement follows by counting inversions.
% \end{pf}

%s2.2 #&#
\subsection{The reduced random walk on alcoves}\label{ss:random}
We define a random walk on alcoves. The walk begins at $X_0 = A^\circ$.
Given $(X_0,X_1,\ldots,X_\ell)$, we pick $X_{\ell+1}$ uniformly at
random among the alcoves adjacent to (i.e., sharing a facet with)
$X_\ell$, with the constraint that the hyperplane separating $X_\ell$
and $X_{\ell+1}$ has not been crossed previously. It follows easily
from Coxeter group theory that such walks can never ``get stuck.''

Based on the definition, somewhat surprisingly we get:

%le2 #&#
\begin{lemma}
The process $(X_0,X_1,\ldots)$ is a Markov chain.
\end{lemma}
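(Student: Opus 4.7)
The plan is to observe that, despite the apparently history-dependent constraint, the set of hyperplanes already crossed after $\ell$ steps is determined by $X_\ell$ alone; once this is established the Markov property is immediate.

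Write $X_\ell = A_{x_\ell}$ and let $\mathcal{H}_\ell$ denote the (necessarily distinct) hyperplanes crossed during the first $\ell$ steps of the walk. The main step I would carry out is to show that
$$
\mathcal{H}_\ell = \Inv(x_\ell),
$$
identifying each inversion with the corresponding affine hyperplane. This is a parity argument in the affine Coxeter arrangement: a hyperplane $H$ lies in $\Inv(x_\ell)$ iff it separates $A^\circ$ from $A_{x_\ell}$, which holds iff the walk from $A^\circ$ to $X_\ell$ crosses $H$ an odd number of times. Under the no-repeat constraint each hyperplane is crossed at most once, so the odd-parity hyperplanes are precisely those crossed exactly once, which is $\mathcal{H}_\ell$. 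As a byproduct, $\ell = |\Inv(x_\ell)| = \ell(x_\ell)$, and $(X_0,\ldots,X_\ell)$ is a reduced gallery corresponding to a reduced expression for $x_\ell$.

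With this identification in hand, the admissible choices for $X_{\ell+1}$ are exactly the alcoves $A_y$ sharing a facet with $A_{x_\ell}$ whose separating hyperplane lies outside $\Inv(x_\ell)$, equivalently the $y \in \tW$ covering $x_\ell$ in the left weak order. The transition rule is then the uniform distribution on this set, which depends only on $x_\ell$ (and not on $(X_0,\ldots,X_{\ell-1})$), proving the Markov property. The fact that the set of admissible moves is always nonempty — the statement just above the lemma that the walk never gets stuck — follows because no element of the infinite Coxeter group $\tW$ is maximal in the weak order, so every $x_\ell$ has at least one cover.

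I do not anticipate a real obstacle: the only conceptual point is to notice that the constraint can be rephrased in terms of the current state, and the parity identification $\mathcal{H}_\ell = \Inv(x_\ell)$ is a standard fact about reduced galleries in Coxeter complexes.
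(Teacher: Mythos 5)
Your proposal is correct and is essentially the paper's own argument: the paper's one-sentence proof asserts precisely that the crossed hyperplanes after $\ell$ steps are those separating $X_\ell$ from $A^\circ$, and your parity argument simply supplies the justification for that assertion before drawing the same immediate conclusion. No substantive difference in approach.
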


\begin{pf}
The hyperplanes that have been crossed during the first $\ell$ steps of
the walk $(X_0,X_1,\ldots,X_\ell)$ are exactly the hyperplanes
separating $X_\ell$ from $X_0 = A^\circ$.
\end{pf}

We call this process the \emph{random walk in $\tW$} (or sometimes the
\emph{reduced random walk in $\tW$}), starting at the fundamental
alcove. We shall also consider the process $(Y_0,Y_1,\ldots)$ where the
random walk is constrained to stay within the fundamental Weyl chamber.
We call this the \textit{reduced affine Grassmannian random walk} in~$\tW$.

% Let $\pro V$ denote the projective space of $V$, namely, the space of
%all lines in $V$.
% \begin{theorem}\label{T:main}
% Let $\tW$ be an affine Weyl group and $(Y_0,Y_1,\ldots)$ the affine
%Grassmannian random walk in $\tW$. Set $Y_i = A_{w_i t_{
%\lambda^{(i)}}}$. Then there exists a point $\psi\in\pro V$ so that
%almost surely
% \begin{equation}
% \label{E:main}
% \lim_{N \to\infty}\sp(\lambda^{(N)}) = \psi
% \end{equation}
% in $\pro V$.
% \end{theorem}
%s2.3 #&#
\subsection{Reformulation in terms of infinite reduced words}
An infinite reduced word $\i= \cdots i_3 i_2 i_1$ is an infinite word
such that $i_r i_{r-1} \cdots i_1$ is a reduced word for $\tW$, for any
$r$. The Coxeter-equivalence of reduced words can be extended to \textit{braid limits} of infinite reduced words. It is known that any infinite
reduced word $\i$ of $\tW$ is braid equivalent to an infinite reduced
word of the form $\cdots\tau\tau\tau u$, where $\tau$ is the
reduced word of a translation element, and $u$ is a finite reduced word
for $\tW$ (see~\cite{Ito,LP}).
%
%\begin{theorem}[{\cite{Ito,LP}}]
%Any infinite reduced word $\i$ of $\tW$ is braid equivalent to an
%infinite reduced word of the form $\cdots\tau\tau\tau u$, where $
%\tau$ is the reduced word of a translation element, and $u$ is a
%finite reduced word for $\tW$.
%\end{theorem}

Sequences $(X_0,X_1,\ldots)$ of alcoves as considered in Section~\ref
{ss:random} are tautologically in bijection with infinite reduced
words. Thus, Theorem~\ref{T:main} says that a random infinite reduced
word $\i$ is not only almost surely braid equivalent to $\tau^\infty$
for one of $|W|$-many $\tau$'s, but indeed that almost surely $\i$ and
$\tau^\infty$ asymptotically converge to the same point of the boundary
of the Tits cone (cf. \cite{LP}, Remark~4.5).

%s3 #&#
\section{Projection to the finite Weyl group}
%s3.1 #&#
\subsection{A Markov chain on $W$}\label{ss:W}
We define a Markov chain with finite state space~$W$, which appears to
be of independent combinatorial interest. Let\vadjust{\goodbreak} $r = |I|+1$ be the rank
of~$\tW$. The transition probability from $w$ to $v$ is given by
\[
p_{w,v} = %
\cases{ 1/r, & \quad$\mbox{if $v = s_i w$
and $\ell(v) < \ell(w)$}$, \vspace *{2pt}
\cr
1/r, &\quad $\mbox{if $v = r_\theta
w$ and $\ell(v) > \ell(w)$}$, \vspace *{2pt}
\cr
k/r, & \quad $\mbox{if $v = w$}$,
\vspace*{2pt}
\cr
0, &\quad  $\mbox{otherwise}$,} %
\]
where $k$ is chosen so that $\sum_{v \in W} p_{w,v} = 1$. Let $P =
(p_{w,v})$ denote the transition matrix. Let $\Theta_W$ denote the
directed graph on $W$ with edges given by the nonzero transitions
(see Figure \ref{fig:S3}).
Let
$Z_0,Z_1,\ldots$ be the Markov chain on $\Theta_W$ with transition
matrix $P$.

%f3 #&#
\begin{figure}

\includegraphics{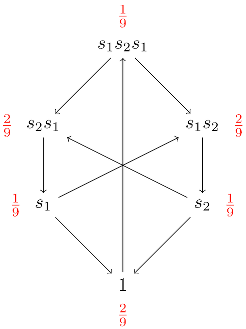}

\caption{The graph $\Theta_{S_3}$ (with the transitions from a vertex
to itself removed) and the stationary distribution $\zeta_{S_3}$.}
\label{fig:S3}
\end{figure}

%\begin{figure}
%%
%\begin{center}
%\includegraphics{f03.eps}
%%\begin{tikzpicture}[scale=1.5]
%%\node(0) [label={[red]below: $\frac{2}{9}$}] at (0,0) {$1$};
%%\node(12) [label={[red]right: $\frac{2}{9}$}] at (1,2) {$s_1s_2$};
%%\node(21) [label={[red]left: $\frac{2}{9}$}] at (-1,2) {$s_2s_1$};
%%\node(1) [label={[red]left: $\frac{1}{9}$}] at (-1,1) {$s_1$};
%%\node(2) [label={[red]right: $\frac{1}{9}$}] at (1,1) {$s_2$};
%%\node(121) [label={[red]above: $\frac{1}{9}$}] at (0,3) {$s_1s_2s_1$};
%%
%%\draw[->] (0) -- (121);
%%\draw[->] (1)--(12);
%%\draw[->](2)--(21);
%%\draw[->](121)--(21);
%%\draw[->](121)--(12);
%%\draw[->](21)--(1);
%%\draw[->](12)--(2);
%%\draw[->](1) -- (0);
%%\draw[->](2)--(0);
%%\end{tikzpicture}
%
%\end{center}
%%
%\end{figure}

%pr1 #&#
\begin{prop}\label{P:sca}
The Markov chain $(Z_0,Z_1,\ldots)$ is irreducible and aperiodic.
\end{prop}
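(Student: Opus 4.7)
My plan is to verify aperiodicity and strong connectivity separately.

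\textbf{Aperiodicity.} The self-loop probability at each state $w$ is $k/r$ where $k = r - d(w) - t(w)$, with $d(w) := |\{i \in I : s_i w < w\}|$ the number of left descents and $t(w) := \mathbf{1}[r_\theta w > w]$. Since $d(w) \le |I| = r-1$, with equality only when $w = w_0$, and since $r_\theta w_0 < w_0$ forces $t(w_0)=0$, we obtain $d(w) + t(w) \le r-1$ uniformly. Hence $k \ge 1$ and $p_{w,w} \ge 1/r > 0$, which gives aperiodicity.

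\textbf{Strong connectivity.} It suffices to show that $e$ is reachable from every $w$ and that every $w$ is reachable from $e$. The first direction is immediate: for any $w \ne e$, a left descent exists, so iterating gives a descending path of length $\ell(w)$ ending at $e$. For the second direction I would proceed by induction on $\ell(w)$. The base case $w = e$ is trivial. For the inductive step, split on $t(w)$. In \emph{Case~1} ($r_\theta w < w$), the element $r_\theta w$ has strictly smaller length and is reachable by the induction hypothesis; the $r_\theta$-ascent edge $r_\theta w \to w$, valid because $r_\theta \cdot (r_\theta w) = w > r_\theta w$, completes the path.

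The main obstacle is \emph{Case~2} ($r_\theta w > w$), where no edge into $w$ originates from a strictly shorter element --- both kinds of incoming edges necessarily come from longer elements. My plan here is to route through a reachable element $u$ of length $\ell(w)+1$ with $u \to w$ a valid descent edge, i.e., $u = s_j w$ for some left ascent $s_j$ of $w$ (which exists since $w \ne w_0$ in Case~2), and then reach $u$ by a recursive application of the same case analysis. The cleanest way to organize this is via the identification, sketched in the paper, of the Markov chain on $W$ with a projection of the affine Grassmannian weak order under $s_i \mapsto s_i$ for $i \in I$ and $s_0 \mapsto r_\theta$. Concretely, given any $w$, Lemma \ref{lem:Grass} guarantees an affine Grassmannian element $x = wt_\lambda$ for any sufficiently anti-dominant $\lambda$; reading any reduced expression of $x$ in $\tW$ right-to-left and projecting step by step yields a path $e \to w$, provided one verifies that every up-step $s_i x' > x'$ in $\tW$ corresponds to a valid $\Theta_W$ transition in $W$. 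This verification, using \eqref{E:affineaction} to track the signs of the $\ip{\lambda,\cdot}$-type quantities arising along the walk, is the technical heart of the argument.
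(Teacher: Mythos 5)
Your proof takes a genuinely different route from the paper's, which disposes of the proposition in two lines: aperiodicity is declared clear from the definition (because of the self-loops, which your count $d(w)+t(w)\le r-1$ makes precise --- that part of your argument is complete and correct), and strong connectivity is outsourced entirely to a citation of Hivert--Schilling--Thi\'ery. You instead attempt a self-contained argument. The ``down'' direction (every $w$ reaches $\id$ along descent edges) is fine, and your eventual strategy for the ``up'' direction --- realize $w$ as the projection of an affine Grassmannian element $x=wt_\lambda$ and push a saturated left-weak-order chain $\id\lessdot\cdots\lessdot x$ through $\pi$ --- is essentially a re-derivation of Proposition \ref{P:Markov}: the verification you defer as ``the technical heart'' is exactly Lemma \ref{L:s0} for the $s_0$-steps, together with the length formula of Lemma \ref{lem:Grass}, which forces $s_iw'<w'$ whenever $s_ix'>x'$ with $i\in I$ and both alcoves affine Grassmannian. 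So the approach is sound and, unlike the paper's, does not lean on an external reference. (Your first idea for Case~2 --- recursing upward to a longer $u=s_jw$ --- does not terminate as stated, since $u$ may again fall into Case~2; you are right to abandon it.)

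One point must be added for the argument to close. You propose to read ``any reduced expression of $x$'' right-to-left, but the implication ``$s_ix'>x'$ with $i\in I$ implies $s_iv<v$'' (where $x'=vt_\mu$) requires both $x'$ and $s_ix'$ to be affine Grassmannian; at a step where the chain leaves the fundamental chamber one can perfectly well have $s_iv>v$, which is not an edge of $\Theta_W$ (already $\id\lessdot s_i$ for $i\in I$ projects to the ascent $\id\to s_i$). What saves you is that the affine Grassmannian elements form a lower order ideal in left weak order: $x$ is affine Grassmannian iff $\alpha_i\notin\Inv(x)$ for all $i\in I$, and $y\le x$ means $\Inv(y)\subseteq\Inv(x)$. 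Hence every saturated chain from $\id$ to an affine Grassmannian $x$ stays affine Grassmannian, and Lemmas \ref{lem:Grass} and \ref{L:s0} then apply at every step. With that observation, plus the existence (from Lemma \ref{lem:Grass}) of an affine Grassmannian $x=wt_\lambda$ for $\lambda$ strictly anti-dominant, your proof is complete.
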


\begin{pf}
Aperiodicity is clear from the definition. Strong connectedness follows
from \cite{HST}, Theorem~4.2.
\end{pf}

It follows that $(Z_0,Z_1,\ldots)$ has a unique limit stationary distribution.

%pr1 #&#
\begin{problem}
Explicitly describe the stationary distribution $\zeta= \zeta_W$ of
$(Z_0,Z_1,\ldots)$ for each $W$.
\end{problem}

This distribution appears to have remarkable enumerative properties,
especially for the symmetric group \cite{LW}.

%co1 #&#
\begin{conjecture}\label{conj:Aeigenvector}
Let $W = S_n$. Then $\zeta(w)/\zeta(w_0)$ is an integer for all $w
\in
W$, and $\zeta(1)/\zeta(w_0) = \prod_{k=0}^{n-1} {n\choose k} = \max_{w
\in W}( \zeta(w)/\zeta(w_0))$.\footnote{After this paper was written,
Svante Linusson pointed out to us that the integrality part of
Conjecture \ref{conj:Aeigenvector} follows from the work of Ferrari and
Martin \cite{FM} on multitype TASEP. Aas \cite{Aas} has announced
a proof of the product expression for $\zeta(1)/\zeta(w_0)$.}
\end{conjecture}

%re1 #&#
\begin{remark}\label{rem:othertypes3}
The integrality part of Conjecture \ref{conj:Aeigenvector} fails for
other types. For example, it is false for $W$ of type $B_3$.
However, the weighted version of $\Theta_W$, as described in Remark~\ref
{rem:othertypes} and Section~\ref{ss:Grass}, still appears to retain
these properties.
\end{remark}

%re2 #&#
\begin{remark}\label{rem:reduced_words}
Let $\mu_N$ be the probability measure on length $N$ elements of $\tW$,
where $\mu_N(x)$ is proportional to the number of reduced words of $x$.
Define $P'$ by setting the diagonal entries of $P$ to 0. The matrix
$P'$ is a sub-stochastic matrix, which nevertheless calculates the
projected measures $\pi(\mu_N)$ after scaling. (The matrix $P'$ weights
each path equally regardless of the valency of the vertices that it
passes through.)

After scaling, and conjugation by a suitable diagonal matrix $D$, one
does obtain a Markov chain with transition matrix given by $Q = r
D^{-1} P' D$. The methods in this section will still prove Corollary~\ref{C:main} for the measures $\mu_N$ (but with a different limit
$\psi$).
\end{remark}

%s3.2 #&#
\subsection{Projection}\label{ss:proj}
Let $(Y_0,Y_1,\ldots)$ denote the affine Grassmannian random walk of
\ref{ss:random}. We let $(\tY_0,\tY_1,\ldots)$ denote the delayed
random walk, where $\tY_{i+1}$ has probability $k/r$ of being equal to
$\tY$, where $r = |I|+1$ is the rank of the affine Weyl group, and $k$
is the number of facets of $\tY_i$ which separate $\tY_i$ from
$A^\circ
$. Each of the transitions in the original random walk now have
probability $1/r$. Similarly, define~$\tX$.

Let $\pi\dvtx  \tW\to W$ be the projection given by $wt_\lambda\mapsto w$.
The following proposition is a key observation of the paper.

%pr2 #&#
\begin{prop}\label{P:Markov}
The projection $\pi(\tY_0,\tY_1,\ldots)$ of the delayed affine
Grassmannian random walk is the Markov chain $(Z_0,Z_1,\ldots)$, with
initial condition $Z_0 = \id$.
\end{prop}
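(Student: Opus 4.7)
The plan is to verify that $\pi(\tY)$ and $Z$ have matching one-step transition probabilities, both depending only on the current state $w \in W$; since $\pi(\tY_0) = \pi(A^\circ) = \id = Z_0$ already agree, this identifies the two Markov chains. Fix $\tY_i = A_x$ with $x = wt_\lambda$ affine Grassmannian (using Lemma \ref{lem:Grass}). The $r = |I|+1$ facets of $A_x$ are indexed by $I \cup \{0\}$; selecting facet $i$ (with probability $1/r$ in the delayed walk) moves to $A_{s_ix}$ provided both $s_ix > x$ and $s_ix$ is still affine Grassmannian, otherwise the walk remains at $A_x$. Using $s_0 = r_\theta t_{-\theta^\vee}$, the projections are $\pi(s_ix) = s_iw$ for $i \in I$ and $\pi(s_0x) = r_\theta w$.

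Everything thus reduces to the following combinatorial claim: facet $i \in I$ is unblocked iff $s_iw < w$, and facet $0$ is unblocked iff $r_\theta w > w$. Granting this, the transition rule at $w$ reads probability $1/r$ to $s_iw$ for each descent $i \in I$ of $w$, probability $1/r$ to $r_\theta w$ when $r_\theta w > w$, and self-loop otherwise, exactly matching $p_{w,v}$.

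The $i \in I$ case is routine. Writing $x^{-1}\alpha_i = w^{-1}\alpha_i + \ip{\lambda, w^{-1}\alpha_i}\delta$, I would split on the sign of $w^{-1}\alpha_i$. When $s_iw < w$, the strict Grassmannian inequality of Lemma \ref{lem:Grass} applied to the positive root $-w^{-1}\alpha_i$ forces the $\delta$-coefficient to be strictly positive, giving $s_ix > x$; moreover the inversion set for $(s_iw, \lambda)$ is contained in that for $(w,\lambda)$, so the Grassmannian conditions propagate. When $s_iw > w$, anti-dominance pins $\ip{\lambda, w^{-1}\alpha_i} \leq 0$: if strictly negative then $s_ix < x$, whereas if zero then $s_ix > x$ but the new strict Grassmannian inequality for $(s_iw, \lambda)$ fails at $\beta = w^{-1}\alpha_i$.

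The $i = 0$ case is the main obstacle. I would start from $s_0x = r_\theta w \cdot t_{\lambda - w^{-1}\theta^\vee}$ and $x^{-1}\alpha_0 = -w^{-1}\theta + (1 - \ip{\lambda, w^{-1}\theta})\delta$. Under the hypothesis $w^{-1}\theta > 0$ (equivalently $r_\theta w > w$), anti-dominance of $\lambda$ gives $1 - \ip{\lambda, w^{-1}\theta} \geq 1$, so $s_0x > x$; the affine Grassmannian conditions for the pair $(r_\theta w, \lambda - w^{-1}\theta^\vee)$ then follow from a finite case analysis on the possible values $\ip{\theta^\vee, w\beta} \in \{-2,-1,0,1,2\}$ for $\beta \in R^+$, using integrality $\ip{\lambda, \beta} \in \Z$ to upgrade strict Grassmannian inequalities to $\leq -1$, and noting that the value $-2$ (which would correspond to $\beta = -w^{-1}\theta$) is excluded precisely by $w^{-1}\theta > 0$. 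Conversely, when $w^{-1}\theta < 0$, evaluating anti-dominance of $\lambda - w^{-1}\theta^\vee$ at $\beta = -w^{-1}\theta \in R^+$ yields $-\ip{\lambda, w^{-1}\theta} + 2$, which violates anti-dominance when $\ip{\lambda, w^{-1}\theta} = 1$, while the remaining case $\ip{\lambda, w^{-1}\theta} \geq 2$ already gives $s_0x < x$ directly from the $\delta$-coefficient computation.
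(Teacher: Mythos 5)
Your proposal is correct and takes essentially the same route as the paper: the facet-$0$ criterion you isolate is exactly the paper's Lemma~\ref{L:s0}, which is proved there by the same computations (positivity of $x^{-1}\alpha_0$ from anti-dominance, the Grassmannian check via the possible values of $\ip{\theta^\vee, w\beta}$ together with integrality of $\ip{\lambda,\beta}$ --- the exclusion of the value $-2$ for $\beta \neq -w^{-1}\theta$ being the paper's Lemma~\ref{lem:high} --- and the converse split according to whether $\ip{\lambda, w^{-1}\theta}$ equals $1$ or is at least $2$). The facet-$i$ case for $i \in I$, which the paper attributes to Lemma~\ref{lem:Grass} without writing out the details, is handled by you in the intended way.
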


The result follows from Lemmas \ref{lem:Grass} and~\ref{L:s0}.

%le3 #&#
\begin{lem}\label{lem:high}
Let $\alpha\in R^+ - \{\theta\}$. Then $\langle\theta^\vee,\alpha
\rangle
\in\{0,1\}$.
\end{lem}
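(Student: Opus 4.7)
The plan is to analyze the $\theta$-string through $\alpha$, that is the maximal sequence of roots of the form $\alpha + k\theta$ with $k \in \Z$, and exploit the defining property of the highest root: for any positive root $\beta \neq \theta$, $\theta + \beta$ is not a root (since otherwise $\theta + \beta$ would be a positive root larger than $\theta$ in the root order, contradicting the maximality of $\theta$). Write the $\theta$-string through $\alpha$ as $\alpha - p\theta,\ \alpha - (p-1)\theta,\ \ldots,\ \alpha + q\theta$, and recall the standard formula $p - q = \ip{\theta^\vee, \alpha}$.

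First I would argue $q = 0$: since $\alpha$ is a positive root different from $\theta$ and $-\theta$, the vector $\alpha + \theta$ is either $0$, $2\theta$, or a positive root of strictly greater height than $\theta$; the first two possibilities are excluded and the third contradicts maximality of $\theta$. Hence $\alpha + \theta$ is not a root, so $q = 0$ and $\ip{\theta^\vee, \alpha} = p \geq 0$.

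Next I would establish $p \leq 1$ by contradiction. Suppose $p \geq 2$; then $\alpha - 2\theta$ is a root. Computing heights in an irreducible root system (where $\theta$ is the unique root of maximum height), $\text{ht}(\alpha - 2\theta) = \text{ht}(\alpha) - 2\,\text{ht}(\theta) < 0$, so $\alpha - 2\theta$ is a negative root, and hence $2\theta - \alpha$ is a positive root. But $\text{ht}(2\theta - \alpha) = 2\,\text{ht}(\theta) - \text{ht}(\alpha) > \text{ht}(\theta)$ since $\text{ht}(\alpha) < \text{ht}(\theta)$, contradicting the maximality of $\text{ht}(\theta)$ among positive roots. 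Combining the two bounds gives $\ip{\theta^\vee, \alpha} \in \{0, 1\}$.

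I do not anticipate a significant obstacle: the argument is entirely standard root-system bookkeeping, provided one knows that $\theta$-strings have length $p + q + 1$ with $p - q = \ip{\theta^\vee, \alpha}$ and that in an irreducible finite root system the highest root is unique and strictly higher than every other positive root. The only mild subtlety is ruling out the pathological case $\alpha + \theta = 2\theta$ (which would need $\alpha = \theta$, excluded by hypothesis) and $\alpha - \theta = 0$ (which would need $\alpha = \theta$ as well), so the hypothesis $\alpha \neq \theta$ is used exactly to exclude these degenerate string configurations.
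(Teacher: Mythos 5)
Your proof is correct and takes essentially the same route as the paper: the paper's one-line proof simply asserts that $\alpha - k\theta$ can be a root only for $k \in \{0,1\}$ and implicitly invokes the root-string formula $p - q = \ip{\theta^\vee,\alpha}$, which is exactly the argument you carry out in detail. Your height computations merely supply the justification the paper leaves to the reader.
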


\begin{pf}
The sum $\alpha- k \theta$ can be a root only if $k \in\{0,1\}$.
\end{pf}

%le4 #&#
\begin{lem}\label{L:s0}
Suppose $x = w t_\lambda\in W_{\mathrm{af}}$ is affine Grassmannian. Then\break
$\ell(r_\theta w)
> \ell(w)$ in $W$ if and only if $s_0 x$ is affine Grassmannian and
$s_0 x \succ
x$.
\end{lem}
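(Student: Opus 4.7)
The plan is to reduce both sides of the equivalence to a sign condition on $\gamma := w^{-1}\theta$. From the affine action \eqref{E:affineaction} applied to $\alpha_0 = \delta - \theta$ one verifies that $s_0 = r_\theta\, t_{-\theta^\vee}$, and hence
\[
s_0 x \;=\; (r_\theta w)\, t_{\lambda - w^{-1}\theta^\vee}.
\]
By the standard reflection criterion, $r_\theta w > w$ is equivalent to $w^{-1}\theta \in R^+$, so the LHS of the lemma is just the condition $\gamma > 0$. On the RHS I will use the general fact that $s_0 x > x$ iff $x^{-1}\alpha_0 \in R_\af^+$, together with \eqref{E:affineaction}, to compute
\[
x^{-1}\alpha_0 \;=\; -\gamma \,+\, (1 - \langle \lambda, \gamma \rangle)\,\delta,
\]
and I will test the affine Grassmannian condition for $s_0 x$ using Lemma \ref{lem:Grass} on the pair $(r_\theta w,\mu)$ with $\mu := \lambda - w^{-1}\theta^\vee$.

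\emph{Forward direction} ($\gamma > 0$). Anti-dominance of $\lambda$ gives $\langle\lambda,\gamma\rangle \leq 0$, so $1 - \langle\lambda,\gamma\rangle \geq 1$; combined with $-\gamma < 0$, this places $x^{-1}\alpha_0$ in case (b) of a positive affine root, so $s_0 x > x$. To verify aff. Grassmannian-ness of $(r_\theta w, \mu)$ I case-split on $w\beta$ for $\beta \in R^+$. Lemma \ref{lem:high} confines $\langle \theta^\vee, w\beta\rangle$ to $\{-1,0,1\}$ unless $w\beta = \pm\theta$; the case $w\beta = -\theta$ is excluded by $\gamma > 0$; $w\beta = \theta$ forces $\beta = \gamma$ and yields $\langle\mu,\gamma\rangle \leq -2$; and whenever $\langle\theta^\vee, w\beta\rangle = -1$ one finds $w\beta < 0$, so the Grassmannian condition on $(w,\lambda)$ already supplies $\langle\lambda,\beta\rangle \leq -1$ as compensation. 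For the inversion condition of Lemma \ref{lem:Grass}, I need the classical fact that $\theta - \alpha \in R^+$ whenever $\alpha \in R^+ \setminus \{\theta\}$ has $\langle\alpha,\theta^\vee\rangle = 1$ (a consequence of $\theta$ being the unique maximum of $R^+$ in the coordinate order on simple roots), which determines the sign of $r_\theta w\beta$ and identifies the inversions of $r_\theta w$.

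\emph{Converse} ($\gamma < 0$). The Grassmannian condition for $(w,\lambda)$ applied to $\alpha := -\gamma \in R^+$ (which satisfies $w\alpha = -\theta < 0$) forces $\langle\lambda, -\gamma\rangle \leq -1$, i.e., $\langle\lambda,\gamma\rangle \geq 1$. If $\langle\lambda,\gamma\rangle \geq 2$, then $x^{-1}\alpha_0 = -\gamma + (\text{negative})\,\delta$ is a negative affine root, so $s_0 x < x$. If $\langle\lambda,\gamma\rangle = 1$, then $s_0 x > x$, but $\langle\mu, -\gamma\rangle = \langle\lambda,-\gamma\rangle - \langle\theta^\vee, -\theta\rangle = -1 + 2 = 1 > 0$, so $\mu$ fails anti-dominance and $s_0 x$ is not affine Grassmannian. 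In either subcase the RHS of the lemma fails, completing the equivalence.

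The main obstacle is the forward-direction Grassmannian check on $(r_\theta w,\mu)$: tracking the interaction between the new pairings $\langle\theta^\vee,w\beta\rangle$ and the available slack in the old inequalities $\langle\lambda,\beta\rangle \leq 0$ requires Lemma \ref{lem:high} together with the maximality of $\theta$ in the root poset, but once these tools are in place each case collapses routinely.
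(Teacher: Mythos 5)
Your proposal is correct and follows essentially the same route as the paper: the same identity $s_0=r_\theta t_{-\theta^\vee}$, the same computation of $x^{-1}\alpha_0$ for the length comparison, and the same case analysis governed by Lemma \ref{lem:high} for the Grassmannian check. The only (cosmetic) difference is that you verify the two conditions of Lemma \ref{lem:Grass} for the pair $(r_\theta w,\,\lambda-w^{-1}\theta^\vee)$, whereas the paper directly checks positivity of the affine roots $s_0x\cdot\beta=(r_\theta w)\beta+(\ip{\theta^\vee,w\beta}-\ip{\lambda,\beta})\delta$ — these are the same inequalities, and your explicit appeal to the fact that $\theta-\alpha\in R^+$ when $\ip{\alpha,\theta^\vee}=1$ is exactly what the paper uses implicitly in the step $w\beta+\theta>0$.
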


\begin{pf}
Suppose that $\ell(r_\theta w) > \ell(w)$. Let $\alpha= w^{-1}
\theta
\in R^+$. To show that $s_0x \succ x$, we compute
\[
x^{-1} \alpha_0 = t_{-\lambda} w^{-1}(
\delta- \theta) = \delta- t_{-\lambda}\alpha= \bigl(1-\langle\lambda,\alpha
\rangle\bigr)\delta -\alpha\in R^+_{\af}
\]
since $\lambda$ is antidominant by Lemma~\ref{lem:Grass}. To show
that $s_0x$ is affine Grassmannian, we calculate for $\beta\in R^+$
\begin{eqnarray*}
r_\theta t_{-\theta^\vee} x (\beta) &= &r_\theta t_{-\theta^\vee
}
\bigl(w\beta- \langle\lambda,\beta \rangle\delta\bigr)
\\
&=& (r_\theta w) (\beta)+\bigl(\bigl\langle\theta^\vee,w\beta
\bigr\rangle -\langle\lambda,\beta \rangle\bigr)\delta.
\end{eqnarray*}
We need to show that the root $(r_\theta w)(\beta)
+(\langle\theta^\vee,w\beta \rangle -\langle\lambda,\beta
\rangle)\delta$ is positive.

First suppose that $\langle\lambda,\beta \rangle = 0$. Then by Lemma~\ref{lem:Grass}, we have $w \beta\in R^+$, so since $\theta$ is the
highest root we must have $\langle\theta^\vee,w \beta \rangle \geq
0$ by Lemma~\ref{lem:high}. If $\langle\theta^\vee,w\beta \rangle > 0$, we
are done. If
$\langle\theta^\vee,w\beta \rangle = 0$, we must show that
$(r_\theta w)\beta
\in R^+$. We calculate that $(r_\theta w)\beta= w r_\alpha\beta$.
But $\langle\alpha^\vee,\beta \rangle = \langle\theta^\vee,w\beta \rangle = 0$, so that
$w r_\alpha\beta= w \beta\in R^+$.

Now suppose $\langle\lambda,\beta \rangle < 0$. If $w\beta\in R^+$
then by Lemma~\ref{lem:high} we have $\langle\theta^\vee,w\beta \rangle \geq
0$, so we would
be done. If $w \beta\in R^-$, we note that $w \beta\neq-\theta$ so by
Lemma~\ref{lem:high} it suffices to assume that
$\langle\theta^\vee,w\beta \rangle = -1$ and show that $r_\theta w
\beta
\in R^+$. But $r_\theta w \beta= w r_\alpha\beta= w(\beta+\alpha) =
w\beta+ \theta\in R^+$.

For the converse, let us suppose that $\ell(r_\theta w) < \ell(w)$. Let
$\alpha
= -w^{-1}\theta\in R^+$. We have
\[
x^{-1} \alpha_0 = t_{-\lambda}w^{-1}(\delta-
\theta) = \alpha+ \bigl(1+\langle\lambda,\alpha \rangle\bigr)\delta.
\]
But $w \alpha= -\theta\in R^-$, so by Lemma~\ref{lem:Grass}, we have
$\langle\lambda,\alpha \rangle < 0$. If $\langle\lambda,\alpha
\rangle < -1$, then
$x^{-1}\alpha_0$ is a negative root, so that $s_0 x \prec x$. Otherwise,
we have $\langle\lambda,\alpha \rangle = -1$. In this case, we
calculate that
\[
(s_0 x) \alpha= (r_\theta t_{-\theta^\vee} w
t_\lambda) \alpha
\\
= (r_\theta w t_{\lambda+ \alpha^\vee})\alpha
\\
= r_\theta w \alpha- \bigl\langle\lambda+\alpha^\vee,\alpha
\bigr\rangle \delta.
\]
%
%\begin{eqnarray*} (s_0 x) \alpha&= (r_\theta
%t_{-\theta^\vee} w t_\lambda) \alpha\\
%&= (r_\theta w t_{\lambda+ \alpha^\vee})\alpha\\
%&= r_\theta w \alpha- \ip{\lambda+\alpha^\vee,\alpha} \delta.
%\end{eqnarray*}
But $\langle\alpha^\vee,\alpha \rangle = 2$, so $(s_0x)\alpha\in
R^-_{\af
}$, and thus
$s_0x$ is not affine Grassmannian.
\end{pf}

%Now to show that a random path has almost surely a ``typical''
%distribution of vertices used, or of edges used, one can consider the
%random variable $X$ on paths of length $n$, given by the number of
%times a particular edge (or vertex) is used. Then as $n \to\infty$,
%one shows that the expectation and variance of $X$ are close to the
%typical value. Stronger statement: $X$ should converge (after
%normalization), in distribution, to the delta measure.

%s3.3 #&#
\subsection{Proof of Theorem \texorpdfstring{\protect\ref{T:main}}{1}}
\label{ss:proof}
Let $Z = (Z_0,Z_1,\ldots)$ be a random walk on $\Theta_W$ with
transition matrix $P$, and $e = (w \to u)$ an edge in $\Theta_W$. Write
$\kappa_{e,N}(Z)$ for the number of times the edge $e$ is used in
$(Z_0,Z_1,\ldots,Z_N)$.

%le5 #&#
\begin{lemma}\label{lem:ergodic}
We have
\[
\lim_{N \to\infty}\frac{1}{N} \kappa_{e,N}(Z) =
\zeta(w)/r,
\]
almost surely.
\end{lemma}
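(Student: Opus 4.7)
The plan is to reduce the statement to the standard strong law of large numbers (ergodic theorem) for finite-state irreducible aperiodic Markov chains. By Proposition \ref{P:sca}, the chain $(Z_0, Z_1, \ldots)$ is irreducible and aperiodic on the finite state space $W$, hence admits a unique stationary distribution $\zeta$, and the Markov chain ergodic theorem yields, almost surely,
$$
\lim_{N \to \infty} \frac{1}{N} \sum_{i=0}^{N-1} f(Z_i) \;=\; \sum_{w \in W} \zeta(w) f(w)
$$
for every bounded function $f: W \to \mathbb{R}$.

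To pass from vertex-visit averages to edge-traversal averages, I would lift the chain to the ``edge chain'' whose state space is the set of directed edges of $\Theta_W$, with the pair $(Z_i, Z_{i+1})$ serving as its state at time $i$. Equivalently, I would apply the ergodic theorem to the function $f_e: W \to [0,1]$ defined by $f_e(w') = p_{w,u}$ if $w' = w$ and $0$ otherwise, since then
$$
\mathbb{E}\!\left[\mathbf{1}[Z_{i+1} = u] \,\big|\, Z_i\right] \;=\; f_e(Z_i).
$$
The standard martingale difference argument (Azuma--Hoeffding on the bounded increments $\mathbf{1}[Z_i=w, Z_{i+1}=u] - f_e(Z_i)$, or simply $L^2$ convergence of the orthogonal martingale $M_N = \sum_{i<N} (\mathbf{1}[Z_i=w,Z_{i+1}=u] - f_e(Z_i))$ together with Borel--Cantelli) then shows that almost surely
$$
\frac{1}{N}\,\kappa_{e,N}(Z) \;-\; \frac{1}{N}\sum_{i=0}^{N-1} f_e(Z_i) \;\longrightarrow\; 0.
$$
Combining this with the ergodic theorem applied to $f_e$ gives the almost-sure limit $\zeta(w) \cdot p_{w,u}$.

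Finally, every non-loop edge $e = (w \to u)$ of $\Theta_W$ has $p_{w,u} = 1/r$ by definition of the transition matrix $P$ in Section \ref{ss:W}, so the limit equals $\zeta(w)/r$, as claimed.

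There is no real obstacle; the only mild subtlety is justifying the passage from the ergodic averages of the vertex chain to the edge counts, which is handled by the edge-lifted chain (whose invariant measure is $\zeta(w) p_{w,u}$) or by the elementary martingale estimate above. Everything else is immediate from Proposition \ref{P:sca} and standard finite Markov chain theory.
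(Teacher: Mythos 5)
Your proposal is correct and follows essentially the same route as the paper, which simply invokes the ergodic theorem for finite irreducible aperiodic Markov chains (citing Br\'emaud) and leaves the vertex-to-edge passage implicit. Your edge-lifted chain (with invariant measure $\zeta(w)p_{w,u}$) and the observation that every non-loop edge has $p_{w,u}=1/r$ are exactly the details being suppressed there, so there is nothing to add.
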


\begin{pf}
This follows from the ergodic theorem for Markov chains; see for
example \cite{Bre}, Corollary~4.1.
\end{pf}

\begin{pf*}{Proof of Theorem~\ref{T:main} and first statement of
Theorem~\ref{T:main2}}
We first establish the statement for the delayed affine Grassmannian
random walk $(\tY_0,\tY_1,\ldots)$. Outside a set of measure 0 (those
$\tY$ that eventually stop), $\tY$ naturally maps (by removing repeats)
to the random walk $Y$ defined in Section~\ref{ss:random}.

Let the projection of $\tY$ to $W$ be $\pi(\tY) = Z$, which is a Markov
chain on $\Theta_W$ by Proposition~\ref{P:Markov}. Write $\tY_i =
A_{x_i}$, where $x_i=w_i t_{\lambda^{(i)}}$. The translation element
$\lambda^{(i)}$ only changes from $i$ to $i+1$ if $x_{i+1} = s_0 x_i$.
By Lemma~\ref{L:s0}, this corresponds to transitions $(w_i \to
r_\theta
w_i)$ in $Z$, which changes $\lambda^{(i)}$ by $w_i^{-1}(-\theta^\vee)$
(using $s_0 = r_\theta t_{-\theta^\vee}$).

For two edges $e,e'$, by Lemma~\ref{lem:ergodic}, the ratio $\frac
{\kappa_{e,N}(Z)}{\kappa_{e',N}(Z)}$ converges almost surely to
$\zeta
(w)/\zeta(w')$. It follows that
%
%e3 #&#
\begin{equation}
%\label{E:psi}
\lim_{N \to\infty} \sp\bigl(\lambda^{(N)}\bigr)
\to\sp \biggl(\sum_{w \in
W \dvtx \ell(r_\theta w) > \ell(w)} \zeta(w) w^{-1}
\bigl(-\theta^\vee\bigr) \biggr)
\end{equation}
almost surely. The alcove $A_{w_it_{\lambda^{(i)}}}$ shares a vertex
with the alcove $A_{t_{\lambda^{(i)}}}$, and so $-\lambda^{(i)}$ points
in almost the same direction as $v(\tY_i)$. We thus obtain Theorem~\ref
{T:main} and the first statement of Theorem~\ref{T:main2} for the
reduced affine Grassmannian random walk~$Y$.

Now, the random walk $X=(X_0,X_1,\ldots)$ will eventually stay in some
Weyl chamber, since each Weyl chamber is separated from the fundamental
alcove by some hyperplanes which can be crossed at most once, and there
are finitely many Weyl chambers.

The asymptotic direction of $Y$ does not depend on initial point of the
random walk, but only the constraint that the walk remains inside the
fundamental chamber and heads away from $A^\circ$. Thus, if we know
that $X \in C_w$, we can apply the right action of $W$ to the part of
$X$ lying inside $C_w$ to get a random walk in the fundamental chamber
which almost surely has asymptotic direction $\psi$, completing the proof.
\end{pf*}

The almost sure convergence of Theorem~\ref{T:main} implies convergence
in probability. Pick a norm on $V$.

%co1 #&#
\begin{cor}\label{C:main}
For each $\varepsilon> 0$ and $\delta> 0$, there is a $M = M(\varepsilon,\delta)$ so that
\[
\Prob\bigl(\bigl|v(Y_{N}) - \psi\bigr| \geq\varepsilon\bigr) < \delta
\]
for $N > M$.
\end{cor}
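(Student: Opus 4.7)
The plan is to derive the corollary directly from the almost sure convergence established in the proof of Theorem \ref{T:main}. The proof of that theorem shows in particular that the reduced affine Grassmannian random walk $Y$ satisfies $v(Y_N) \to \psi$ almost surely, and the corollary is just the standard fact that almost sure convergence implies convergence in probability.

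More concretely, fix $\epsilon > 0$ and let $E \subset \Omega$ denote the event that $v(Y_N) \to \psi$ as $N \to \infty$. By Theorem \ref{T:main} (applied to $Y$, as established in Section \ref{ss:proof}), we have $\Prob(E) = 1$. Define $A_N = \{|v(Y_N) - \psi| \geq \epsilon\}$. For any sample path $\omega \in E$, the definition of convergence supplies an $N_0(\omega)$ so that $\omega \notin A_N$ for all $N > N_0(\omega)$; in particular $\omega \notin \limsup_N A_N$. Hence $\limsup_N A_N \subset E^c$.

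Applying the reverse Fatou lemma to the bounded sequence of indicators $\mathbf{1}_{A_N}$ yields
\[
\limsup_{N \to \infty} \Prob(A_N) \leq \Prob\!\left(\limsup_{N \to \infty} A_N\right) \leq \Prob(E^c) = 0,
\]
so $\Prob(A_N) \to 0$. Given $\delta > 0$, we may therefore choose $M = M(\epsilon,\delta)$ large enough that $\Prob(A_N) < \delta$ for all $N > M$, which is exactly the claim.

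There is essentially no obstacle here: the statement is a purely measure-theoretic consequence of the almost sure convergence already proved, and does not require any further input from the combinatorics of $\tW$ or from the Markov chain $Z$. The only mild point of care is to make sure we invoke the almost sure convergence for the affine Grassmannian walk $Y$ (where the limit is a fixed vector $\psi$) rather than the general walk $X$ (where the limit lies in the orbit $W \cdot \psi$ and the argument would have to be applied on each Weyl chamber separately).
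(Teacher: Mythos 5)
Your proposal is correct and follows exactly the paper's route: the paper disposes of this corollary with the single remark that almost sure convergence implies convergence in probability, and you have simply written out that standard implication (via $\limsup_N A_N \subset E^c$ and reverse Fatou) in full detail. Your closing caveat about invoking the convergence for the walk $Y$ (fixed limit $\psi$) rather than $X$ is also consistent with how the statement is phrased.
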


%re3 #&#
\begin{remark}
It follows from the proof of Theorem~\ref{T:main} that the point $\psi$
has rational coordinates, when written in terms of simple coroots. This
implies that there is a translation element of $\tW$ which points in
the same direction as $\psi$.
\end{remark}

%\begin{remark}
%It follows from our techniques that a random walk starting at a
%different initial point in the fundamental chamber will still give the
%same limit direction $\psi$.
%\end{remark}

%re4 #&#
\begin{remark}
In Theorem~\ref{T:main} and Corollary~\ref{C:main}, only the limiting
direction is discussed. The formula in Lemma~\ref{lem:Grass} for the
length $\ell(t_\lambda)$ of a translation element allows us to
calculate the speed that the random walk is traveling from the
fundamental alcove.
\end{remark}

We give an explicit conjecture for $\psi$ when $W = S_n$. In the next
result we treat $\rho$ as a point in $V$ by identifying $V$ and $V^*$
in the usual way.

%co2 #&#
\begin{conjecture}\label{conj:A}
For $W = S_n$, we have $\psi= \gamma\rho$ for some $\gamma> 0$.
\end{conjecture}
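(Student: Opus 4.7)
The plan is to combine a symmetry reduction with an explicit description of the stationary distribution $\zeta$ on $S_n$.

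First I would exploit the diagram automorphism. In type $A_{n-1}$, the involution $\sigma\colon w \mapsto w_0 w w_0^{-1}$ fixes the highest root $\theta$, all lengths, the Bruhat order, and the reflection $r_\theta$, so the Markov chain of Section \ref{ss:W} is $\sigma$-equivariant and $\zeta \circ \sigma = \zeta$. The linear action of $\sigma$ on $V$ coincides with $-w_0$, which fixes both $\theta^\vee$ and $\rho$. Using the compatibility $\sigma(w^{-1}\theta^\vee) = \sigma(w)^{-1}\theta^\vee$, the $\sigma$-invariance of the summation set $\{w : r_\theta w > w\}$, and $\zeta \circ \sigma = \zeta$, the formula of Theorem \ref{T:main2} yields $\sigma(\psi) = \psi$. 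Thus $\psi$ lies in the $(-w_0)$-fixed subspace of $V$. For $n = 3$ this subspace is one-dimensional and spanned by $\rho$, immediately proving the conjecture; for $n \geq 4$ it has dimension $\lfloor n/2 \rfloor$, so further input is required.

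Next I would reformulate the conjecture as marginal identities on $\zeta$. Writing $\theta^\vee = e_1 - e_n$ in the standard basis, $w^{-1}\theta^\vee = e_{w^{-1}(1)} - e_{w^{-1}(n)}$, so setting
$$
\mu(k) := \sum_{w \in S_n \,:\, r_\theta w > w} \zeta(w) \bigl(\mathbf{1}[w^{-1}(1) = k] - \mathbf{1}[w^{-1}(n) = k]\bigr),
$$
the conjecture $\psi = \alpha\rho$ with $\alpha > 0$ is equivalent to $\mu(k) = c(n+1-2k)$ for some $c > 0$ and all $k \in \{1,\ldots,n\}$. The $\sigma$-symmetry already forces $\mu(k) = -\mu(n+1-k)$, and $c > 0$ is immediate because the identity summand contributes $\zeta(1)\,\theta^\vee$, which has strictly positive pairing with $\rho$. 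What remains is the \emph{linearity} of $\mu$ in $k$.

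To establish this linearity I would seek an explicit formula for $\zeta$, using the two descriptions already flagged in the paper. One is the Shi-arrangement sum formula of Section \ref{ss:Shi}, which in type $A$ should relate $\zeta(w)$ to an enumeration of regions (closely tied to parking functions). The other is the affine Grassmannian interpretation of Section \ref{ss:Grass}, where $\zeta$ should be read off from the Pontryagin-product asymptotics of a $K$-homology class on $\Gr_{SL(n)}$; since $\rho^\vee$ is the natural generic translation direction in this setting, this viewpoint is the heuristic source of the conjecture. From either side one would then try to verify the marginal identities either by a $k$-Schur-function calculation or by a bijection on Shi regions.

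The main obstacle is that no closed-form expression for $\zeta(w)$ on $S_n$ is currently available; even Conjecture \ref{conj:Aeigenvector} on the integrality of $\zeta(w)/\zeta(w_0)$ is open, which suggests that $\zeta$ encodes genuinely nontrivial enumerative information. Without such an identification the linearity of $\mu$ appears out of reach, and a full resolution of Conjecture \ref{conj:A} likely requires producing a new combinatorial or geometric formula for $\zeta$ in type $A$ in which the $\rho^\vee$-direction is intrinsic.
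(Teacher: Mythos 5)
The statement you are trying to prove is Conjecture \ref{conj:A}: the paper does not prove it. The author only reports that it has been \emph{verified computationally for $n \leq 6$}, and Remark \ref{rem:othertypes} notes it even fails as stated outside type $A$ (unless the transitions are reweighted). So there is no proof in the paper to compare against, and your proposal --- which candidly stops short of a proof --- cannot be faulted for failing to close an open problem. To your credit, you correctly identify the one formula that is actually available (Theorem \ref{T:main2}) as the only handle on $\psi$, and your symmetry reduction is sound: the conjugation $w \mapsto w_0 w w_0$ intertwines the transition matrix $P$ with itself (it permutes the $s_i$ by the diagram automorphism and fixes $r_\theta$ and lengths), hence fixes $\zeta$, and a short computation with $w_0\theta^\vee = -\theta^\vee$ gives $(-w_0)\psi = \psi$. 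This genuinely settles $n=3$ (where the $(-w_0)$-fixed subspace is $\R\rho$; positivity of $\alpha$ is automatic since every summand $w^{-1}\theta^\vee$ with $r_\theta w > w$ is a positive coroot, so $\ip{\psi,\rho}>0$ --- a cleaner argument than isolating the identity summand). Your reformulation as the marginal identity $\mu(k) = c(n+1-2k)$ is also correct.

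The genuine gap is exactly where you place it: for $n \geq 4$ the fixed subspace has dimension $\lfloor n/2\rfloor > 1$, and establishing the linearity of $\mu$ in $k$ requires information about $\zeta$ that neither the Shi-arrangement formula of Theorem \ref{T:Shi} (which computes $\zeta(w^{-1}w_0)$ as a matrix inverse, not in closed form) nor the $K$-homology heuristics of Section \ref{ss:Grass} currently supply. Since even Conjecture \ref{conj:Aeigenvector} about $\zeta(w)/\zeta(w_0)$ is open, your assessment that a new combinatorial or geometric formula for $\zeta$ is needed matches the state of the paper. In short: this is a well-reasoned research plan with a correct partial result, not a proof, and it should not be graded as a completed proof of the statement.
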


%re5 #&#
\begin{remark}\label{rem:othertypes}
Conjecture \ref{conj:A} does not hold as stated for other types. Define
$\{a_i \mid i \in I\}$ by $\theta= \sum_i a_i \alpha_i$, and set $a_0
= 1$. Now, weight the transitions corresponding to left multiplication
by $s_i$ by a factor of $a_i$. Then our computations suggest that
Conjecture \ref{conj:A} still holds for type $B_n$, and that it is
close to holding in other types. The coefficients $a_i$ here are
connected via affine Dynkin diagram duality to the coefficients
$a_i^\vee$ that we expected to see for reasons related to the topology
of the affine Grassmannian; see Section~\ref{ss:Grass}. The duality may
be an artifact of our choice of $Q^\vee$ instead of $Q$ for the
definition of an affine Weyl group.
\end{remark}

%The almost sure convergence of Lemma~\ref{lem:ergodic} implies
%convergence in probability: for each $\varepsilon> 0$,
%$$
%\lim_{N \to\infty} \Prob(|\frac{1}{N} \kappa_{e,N}(X) - \zeta(w)/n|
%\geq\varepsilon) = 0.
%$$
%In particular, for each $\varepsilon> 0$ and $\delta> 0$, there is a $M
%= M(\varepsilon,\delta)$ so that
%$$
%\Prob(|\frac{1}{N} \kappa_{e,N}(X) - \zeta(w)/n| \geq\varepsilon) <
%\delta
%$$
%for $N > M$.

%s4 #&#
\section{The probability of eventually staying in a Weyl
chamber}\label{sec:W}
%s4.1 #&#
\subsection{Global reversal of the random walk on \texorpdfstring{${\hat{W}}$}{W}}\label{ss:probW}
Let $X = (X_0,X_1,\ldots)$ be the reduced random walk in $\tW$. Write
$X \in C_w$ for the event that $X$ eventually stays in the Weyl chamber
$C_w$. Write $X_N \in C_w^v$ if $X_N \in C_w$ and the type of $X_N$ is
$v$. We use the same notation for the delayed random walk~$\tX$.

The reverse of the random walks $X$ or $\tX$ is a very different
process to the original process. For example, $X$ can go in many
directions, at least at the beginning of the walk, but reversing $X$
gives a walk which heads toward the fundamental chamber. Thus, the next
result is very surprising. It relies on a very special feature of
Coxeter groups, namely the associativity of the Demazure product.

Let $K$ denote the affine $0$-Hecke algebra of $\tW$ (see \cite{LSS}),
with generators $\{T_i \mid i \in I \cup\{0\}\}$, a $\Z$-basis $\{T_x
\mid x \in\tW\}$ where $T_{\id} = 1$, satisfying the multiplication formulae
\[
T_i T_x = %
\cases{ T_{s_i x}, &\quad  $
\mbox{if $\ell(s_i x) > \ell(x)$,}$ \vspace *{2pt}
\cr
T_{x}, & \quad$\mbox{otherwise},$} %
\]
and also
\[
T_x T_i = %
\cases{ T_{xs_i}, &\quad $
\mbox{if $\ell(xs_i) > \ell(x)$,}$ \vspace *{2pt}
\cr
T_{x},
& \quad$\mbox{otherwise.}$} %
\]

In the following, we will freely identify alcoves with elements of $\tW$.

%le6 #&#
\begin{lemma}\label{L:reverse}
For each $x \in\tW$, we have $\Prob(\tX_N = x) = \Prob(\tX_N =
x^{-1})$, and $\Prob(X_N = x) = \Prob(X_N = x^{-1})$.
\end{lemma}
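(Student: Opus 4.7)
The plan is to encode both walks as coefficient-extraction from elements of the affine $0$-Hecke algebra $K$ and then exploit the anti-involution of $K$ that fixes every $T_i$. The associativity of the Demazure product, advertised just before the lemma, is exactly what lets us reshuffle the random product into a form manifestly symmetric under $x\mapsto x^{-1}$.

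For the delayed walk, set $A = \tfrac1r\sum_i T_i \in K$. Expanding
$$
A^N = r^{-N}\sum_{(i_1,\ldots,i_N)} T_{i_1}T_{i_2}\cdots T_{i_N}
$$
and applying the Hecke rule $T_iT_y = T_{s_i\star y}$ (where $\star$ denotes the associative Demazure product) collapses each word to $T_{s_{i_1}\star\cdots\star s_{i_N}}$. Because $\tX_N$ is distributed exactly as the Demazure product of a uniformly random length-$N$ sequence of simple reflections (reversing the index tuple is an innocuous bijection that converts the left-multiplication reading order into the right-multiplication one, by associativity of $\star$), one reads off $\Prob(\tX_N = x)=[T_x]\,A^N$.

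Next, define the anti-automorphism $\psi\colon K\to K$ by $\psi(T_i)=T_i$ and $\psi(ab)=\psi(b)\psi(a)$. Each braid relation $T_iT_jT_i\cdots=T_jT_iT_j\cdots$ is mapped to the same relation after reversing both sides, so $\psi$ is well-defined. If $x=s_{i_1}\cdots s_{i_L}$ is a reduced expression then $s_{i_L}\cdots s_{i_1}$ is a reduced expression for $x^{-1}$, giving $\psi(T_x)=T_{i_L}\cdots T_{i_1}=T_{x^{-1}}$. Since $\psi(A)=A$ and $\psi$ is an anti-homomorphism, $\psi(A^N)=A^N$; comparing the $T_x$- and $T_{x^{-1}}$-coefficients in this $\psi$-fixed element proves the first claim.

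For the non-delayed walk $X$, the plan is to transfer the statement for $\tX$ by integrating out the geometric ``stay'' durations. The probability that $X$ traverses a fixed reduced gallery $\id=y_0\lessdot y_1\lessdot\cdots\lessdot y_L=x$ equals $\prod_{k=0}^{L-1}(r-d(y_k))^{-1}$, which coincides with the probability that $\tX$ visits the same ordered sequence of alcoves with arbitrary stays in between. Summing this and comparing with $\sum_{N\ge0}A^N$ in an appropriate completion of $K$ packages $\Prob(X_N = x)$ as the $T_x$-coefficient of another $\psi$-fixed element, so the second claim follows from the same anti-involution. The main obstacle is precisely this last step: the waiting-time factor $r/(r-d(x))$ depends on $x$ and is not obviously $\psi$-symmetric, so one must either extract a cleanly $\psi$-invariant generating element (perhaps by renormalizing each $T_x$ by a function of $d(x)$) or exhibit a direct involution on reduced galleries ending at $x$ and at $x^{-1}$ that preserves $\prod_k(r-d(y_k))^{-1}$; either way, the underlying ingredient remains the associativity of $\star$ together with the identity $\psi(T_x)=T_{x^{-1}}$.
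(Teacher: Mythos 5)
Your treatment of the delayed walk is correct and is essentially the paper's own argument: the paper sets $\xi=\frac{1}{r}\sum_i T_i$, observes $\Prob(\tX_N=x)=[T_x]\xi^N$, and applies the anti-automorphism $T_y\mapsto T_{y^{-1}}$ of $K$, which fixes $\xi$ and hence $\xi^N$. Your extra verifications (well-definedness of $\psi$ on the braid relations, the Demazure-product bookkeeping) just fill in details the paper leaves implicit, and the reading-order issue you flag is harmless because one sums over all index tuples, a reversal-invariant set.

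For the non-delayed walk you stop short of a proof, and the obstruction you identify is genuine: it cannot be repaired by renormalizing $T_x$ or by an involution on galleries, because the second equality actually fails for the walk $X$ as defined. A gallery $\id=y_0\lessdot\cdots\lessdot y_N=x$ has probability $\prod_{k=0}^{N-1}(r-d(y_k))^{-1}$ under $X$, where $d(y)=\#\{i: s_iy<y\}$, and these weights are not preserved by reduced-word reversal. Concretely, in $\tilde A_2$ take $x=s_2s_0s_1s_0=s_2s_1s_0s_1$: both galleries to $x$ pass through $s_0s_1s_0$, which has two left descents, so $\Prob(X_4=x)=2\cdot\frac13\cdot\frac12\cdot\frac12\cdot1=\frac16$, whereas every intermediate alcove on the two galleries to $x^{-1}=s_0s_1s_0s_2$ has a single left descent, so $\Prob(X_4=x^{-1})=2\cdot\frac13\cdot\frac12\cdot\frac12\cdot\frac12=\frac1{12}$. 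The paper's one-line justification (``restricting to elements with length $N$'') establishes reversal-invariance for the measure on length-$N$ elements proportional to the number of reduced words --- indeed $\Prob(\tX_N=x)=r^{-N}\cdot\#\{\text{reduced words of }x\}$ when $\ell(x)=N$ --- but that is the measure $\mu_N$ of Remark \ref{rem:reduced_words}, not the law of $X_N$, precisely because $X$ weights each step by the valence $r-d(y_k)$. So you should not try to close this gap; note that only the $\tX$ half of Lemma \ref{L:reverse} is used later (in the proof of the second statement of Theorem \ref{T:main2}), so nothing downstream is affected.
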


\begin{pf}
Let $\xi= \frac{1}{|I|+1}(\sum_{i \in I \cup\{0\}} T_i) \in K$. Then
$\Prob(\tX_N = x) = [T_x](\xi)^N$
where $[T_x]$ denotes the coefficient of $T_x$ when an element of $K$
is written in the basis $\{T_y \mid y \in\tW\}$. But the element $\xi$
of $K$ is invariant under the algebra antimorphism $T_x \mapsto
T_{x^{-1}}$ of $K$. It follows that the coefficient of $T_x$ and
$T_{x^{-1}}$ in the product $\xi^N$ coincides. Restricting to elements
with length $N$ gives the second statement.
\end{pf}

We call $x = wt_\lambda\in\tW$ regular if $\lambda\in Q^\vee$ is
regular, that is, the stabilizer subgroup of $W$ acting on $\lambda$
is trivial.

%le7 #&#
\begin{lemma}\label{L:reg}
Suppose $x \in C_w^v$ is regular. Then $x^{-1} \in C_{w_0wv^{-1}}^{v^{-1}}$.
\end{lemma}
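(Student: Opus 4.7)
The plan is to unpack the notation, compute $x^{-1}$ explicitly, and verify directly via Lemma \ref{lem:Grass} that $x^{-1}$ lies in the asserted chamber with the asserted type. The whole proof is essentially bookkeeping modulo one step where regularity enters in a clean way.

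First I would write out what it means to have $x \in C_w^v$. Since $x$ has type $v$, I set $x = v t_\nu$. Since $x \in C_w$, the paper tells us $x = y w$ where $y$ is affine Grassmannian. Using the identity $t_\mu g = g\, t_{g^{-1}\mu}$ for $g\in W$ to push translations past Weyl group elements, writing $y = g t_\mu$ forces $g = v w^{-1}$ and $\mu = w\nu$. So the data is $y = (v w^{-1}) t_{w\nu}$, with $\mu := w\nu$ antidominant (and the Grassmannian condition of Lemma \ref{lem:Grass} for $y$), and regularity of $x$ translates to $\nu$ (equivalently $\mu$) having trivial $W$-stabilizer.

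Next I compute
\[
x^{-1} = (v t_\nu)^{-1} = t_{-\nu} v^{-1} = v^{-1} t_{-v\nu},
\]
so the type of $x^{-1}$ is immediately $v^{-1}$, giving one half of the conclusion. For the chamber, I would set $w' := w_0 w v^{-1}$ as the candidate and compute $y' := x^{-1}(w')^{-1}$ by the same push-past identity. A short calculation gives
\[
y' = (w^{-1} w_0)\, t_{-w_0 \mu},
\]
and by construction $x^{-1} = y' w'$, so the task reduces to showing $y'$ is affine Grassmannian.

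The Grassmannian check via Lemma \ref{lem:Grass} has two parts. Antidominance of $-w_0 \mu$ is automatic: $\mu$ is antidominant, so $-\mu$ is dominant, and $w_0$ sends dominant to antidominant. The second condition demands that for every $\alpha \in R^+$ with $(w^{-1} w_0) \alpha < 0$ we have $\langle -w_0 \mu, \alpha\rangle < 0$. This is precisely the step where regularity is essential, and is the only ``content'' in the lemma: because $\mu$ is antidominant and regular we have $\langle \mu, \beta\rangle < 0$ strictly for all $\beta \in R^+$, which on rewriting gives $\langle -w_0 \mu, \alpha\rangle < 0$ for every $\alpha \in R^+$, so the hypothesis on $\alpha$ is not even needed. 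Without regularity the inequality could degenerate to equality and $x^{-1}$ could land on a chamber wall, which is exactly what the regularity hypothesis is there to avoid. Having checked both conditions, $y'$ is affine Grassmannian, and hence $x^{-1} \in C_{w_0 w v^{-1}}^{v^{-1}}$ as claimed.
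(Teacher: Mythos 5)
Your proposal is correct and follows essentially the same route as the paper: decompose $x = v\,t_{w^{-1}\mu}$ with $\mu$ regular antidominant, invert, and conjugate by $w_0$ to exhibit $x^{-1}$ as (affine Grassmannian)$\cdot(w_0wv^{-1})$ with type $v^{-1}$. You simply spell out more explicitly the verification, via Lemma \ref{lem:Grass}, that $w^{-1}w_0\,t_{-w_0\mu}$ is affine Grassmannian and where regularity is needed, which the paper leaves implicit.
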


\begin{pf}
If $x \in C_w^v$ is regular, then $x = v t_{w^{-1} \mu}$, where $\mu$
is a regular and antidominant. Then $x^{-1} = w^{-1} t_{-\mu} w v^{-1}
= w^{-1}w_0 t_{w_0(-\mu)} w_0 w v^{-1}$, and $w_0(-\mu)$ is antidominant.
\end{pf}

%\begin{thmm}
%Let $X = (X_0,X_1,\ldots)$ be the random walk in $\tW$. Then for each
%$w \in W$, we have $\Prob(X \in C_w) = \zeta(w^{-1}w_0)$.
%\end{thmm}

\begin{pf*}{Proof of second statement of Theorem~\ref{T:main2}}
It is clear that\break $\Prob(X \in C_w) = \Prob(\tX\in C_w)$, so we shall
focus on the delayed walk. Let $\eta(w) = \Prob(\tX\in C_w)$. In the
proof of Theorem~\ref{T:main}, we considered the delayed affine
Grassmannian walk $\tY$, or equivalently, a walk conditioned to lie in
$C_\mathrm{id}$. It follows from Proposition~\ref{P:Markov} that for such
a walk $\Prob(\tY\in C^v_{\id}) = \zeta(v)$. This same argument can be
applied to a walk conditioned to lie in any of the cones $C_w$, and we obtain
\[
\lim_{N \to\infty} \Prob\bigl(\tX_N \in
C^v_w\bigr) = \eta(w) \zeta\bigl(vw^{-1}\bigr).
\]
It follows from Theorem~\ref{T:main} that $\Prob(\tX_N \mbox{is
regular}) \to1$ as $N \to\infty$. Thus, using Lemmas \ref{L:reverse}
and \ref{L:reg}, for each $\varepsilon$ we can find $N$ sufficiently large
so that
\[
\bigl|\Prob\bigl(\tX_N \in C_w^v\bigr) - \Prob
\bigl(\tX_N \in C^{v^{-1}}_{w_0wv^{-1}}\bigr)\bigr| < \varepsilon.
\]
It follows that $\eta(w) \zeta(vw^{-1}) = \eta(w_0wv^{-1})\zeta
(w^{-1}w_0)$ for every $v,w \in W$. We note that setting $\eta(w) =
\zeta(w^{-1}w_0)$ solves this equation, and since $\eta$ is a
probability measure on $W$ this must be the solution.
\end{pf*}

%s4.2 #&#
\subsection{The Shi arrangement}\label{ss:Shi}
The ideas here are related to the language of reduced words in affine
Coxeter groups; see, for example, \cite{BB,Hea}.
The %de2 #&#
\defn{Shi arrangement} is the hyperplane arrangement consisting of
the hyperplanes $\{H_\alpha^0, H_\alpha^1 \mid\alpha\in R^+\}$. One
of the regions (connected components of the complement) of the Shi
arrangement is exactly the fundamental alcove $A^\circ$.

Let $B$ and $B'$ be two regions of the Shi arrangement. We say that $B$
is less than or equal to $B'$, and write $B \trianglelefteq B'$ if the
set of hyperplanes of the Shi arrangement separating $B'$ from the
fundamental alcove, contains the same set for $B$. %Write $B \lhd B'$
%for the cover relations of this partial order.

Let $\Gamma$ denote the set of pairs $(B,w)$, where $B$ is a region of
the Shi arrangement, and $w \in W$ is such that $B$ contains an alcove
of type $w$. We make $\Gamma$ into a directed graph by defining edges
$(B,w) \to(B',w')$ whenever $B \trianglelefteq B'$, and an alcove $A$
of type $w$ in $B$ is adjacent (shares a facet) with an alcove $A'$ of
type $w'$ in $B'$, satisfying $A \lessdot A'$.

%le8 #&#
\begin{lemma}\label{L:Shi}If $(B,w) \to(B',w')$ then every alcove $A$
of type $w$ in $B$ shares a facet with an alcove $A'$ of type $w'$ in
$B'$, and we have $A \lessdot A'$.
\end{lemma}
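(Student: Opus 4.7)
The plan is to argue that the simple reflection $s_i$ used in the witness pair $(A_0, A_0')$ works for every alcove of type $w$ in $B$. Write $A_0' = s_i A_0$ for the unique $i \in I \cup \{0\}$ whose left multiplication in $\tW$ sends type $w$ to type $w'$ (so $w' = s_iw$ for $i \in I$, or $w' = s_\theta w$ for $i=0$). For any other alcove $A_1$ of type $w$ in $B$, set $A_1' := s_i A_1$; this is automatically of type $w'$ and facet-adjacent to $A_1$, so the remaining tasks are to verify the cover relation $A_1 \lessdot A_1'$ and the membership $A_1' \in B'$.

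Write $A_j = A_{wt_{\lambda_j}}$ for $j \in \{0,1\}$. The cover relation $A_j \lessdot s_i A_j$ is equivalent to positivity of the affine root $(wt_{\lambda_j})^{-1}\alpha_i = \mu + k_j \delta$, where $\mu = w^{-1}\alpha_i$ and $k_j = \langle\lambda_j, \mu\rangle$ (with an analogous formula in the $i=0$ case involving $w^{-1}\theta$ and a shift by $1$). Setting $\gamma := |\mu| \in R^+$ and $m_j := \langle\lambda_j, \gamma\rangle$, a case analysis on the signs of $\mu$ and $w\gamma$ shows that positivity of $\mu + k_j\delta$ is equivalent to $A_j$ avoiding one specific Shi-strip at $\gamma$ (one of the three regions into which the pair $H_\gamma^0, H_\gamma^1$ cuts the $\gamma$-axis), and that this condition depends only on which Shi-strip $A_j$ occupies at $\gamma$, not on the exact value of $m_j$. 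Since $A_0$ and $A_1$ share the Shi region $B$, they occupy the same Shi-strip at every positive root, so the positivity condition holds for $A_1$ too, yielding $A_1 \lessdot A_1'$.

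For $A_1' \in B'$, observe that $A_j$ and $A_j' = s_i A_j$ share a facet on the hyperplane $H_\mu^{k_j}$, which is perpendicular to $\gamma$. By the general principle that two alcoves sharing a facet lie in the same Shi region if and only if the shared facet does not lie on a Shi hyperplane, the Shi region of $A_j'$ is determined by the Shi region of $A_j$ together with whether $H_\mu^{k_j}$ is a Shi hyperplane. Whether $H_\mu^{k_j}$ is Shi depends only on $\mu$, on $w\gamma$, and on the Shi-strip of $A_j$ at $\gamma$---all data shared by $A_0$ and $A_1$. Hence $A_0'$ and $A_1'$ lie in the same Shi region, which must be $B'$.

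The principal obstacle in executing this plan is the bookkeeping for the case analysis in the cover-relation step, which splits into four subcases according to $i \in I$ versus $i = 0$, and the sign of $\mu$ (respectively of $w^{-1}\theta$). The $i=0$ case is somewhat trickier because the translation part of $s_0 \cdot wt_{\lambda_j}$ shifts by $w^{-1}(-\theta^\vee)$, and Lemma~\ref{lem:high} is useful to control the resulting interactions when identifying the relevant Shi-strip.
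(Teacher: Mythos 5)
Your argument is correct in substance but takes a genuinely different route from the paper's. The paper's proof exploits the fact that two alcoves of type $w$ in $B$ differ by a translation: it sets $\tA = A + \lambda$, $\tA' = A' + \lambda$, and shows that no Shi hyperplane can separate $A'$ from $\tA'$ --- the only candidate is the common wall $H = H_\alpha^k$, and if $\ip{\lambda,\alpha} \neq 0$ then $B$ is unbounded in the $\alpha$-direction, forcing $H$ to be a Shi hyperplane through $B$ and hence to separate $A$ from $A^\circ$, contradicting $A \lessdot A'$. That gives both conclusions at once with essentially no case analysis. Your route instead computes $x_j^{-1}\alpha_i$ explicitly and reduces everything to the Shi-strip of $B$ at the single root $\gamma$; I checked the four cases and your claimed equivalences do hold (e.g.\ for $\mu = w^{-1}\alpha_i > 0$ the cover condition is $\ip{\lambda_j,\gamma} \geq 0$, which is exactly ``$A_j$ is not in the unbounded strip beyond the level-one Shi wall''). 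What your approach buys is an explicit root-theoretic criterion, at the cost of the bookkeeping you acknowledge plus two points you should make explicit in the last step. First, you need the crossed wall, when it is Shi, to be \emph{the same} Shi hyperplane for every alcove of type $w$ in $B$ (its level is pinned down by the case analysis, but only after the cover condition is imposed --- as written, ``whether $H_\mu^{k_j}$ is Shi depends only on the strip'' is false without that condition, e.g.\ $m_j = -1$ versus $m_j = -2$ occupy the same strip). Second, your ``general principle'' says when the region changes but not which region you land in; to conclude that $A_0'$ and $A_1'$ share a region you should add that regions of a hyperplane arrangement are the nonempty intersections of open half-spaces, so the region of $A_j'$ is determined by its sign vector, namely that of $B$ flipped at the one crossed Shi hyperplane. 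Both fixes are one sentence each, so the plan is sound.
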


\begin{pf}
Suppose $A$ and $\tA$ are both of type $w$ inside $B$. Set $\tA= A +
\lambda$. Let $H$ be a hyperplane (not necessarily belonging to the Shi
arrangement) cutting out a facet of (the closure of) $A$, and suppose
$A'$ is on the other side of $H$, adjacent to $A$ and satisfying $A
\lessdot A'$. Similarly, define $\tA'$ adjacent to $\tA$, on the other
side of $\tH:= H + \lambda$. Clearly, $\tA' = A' + \lambda$.

Since $A$ and $\tA$ belong to the same region of the Shi arrangement,
the line segment joining the center of $A$ to the center of $\tA$ does
not intersect the Shi arrangement. But one can go from $A'$ to $\tA'$
by crossing $H$, traveling from $A$ to $\tA$ and crossing $\tH$. Thus,
the only hyperplanes of the Shi arrangement that could separate $A'$
from $\tA'$ are the parallel hyperplanes $H$ and $\tH$.

Suppose first that $H$ belongs to the Shi arrangement. If at least one
of $H$ or $\tH$ separates $A'$ from $\tA'$, then since $A$ and $\tA$
are on the same side of $H$, it follows that $H$ separates $A'$ from
$\tA'$. We have that $\lambda$ cannot be parallel to $H$ (otherwise $H
= \tH$). Let $H$ be orthogonal to the root $\alpha$, so that we must
have $\langle\lambda,\alpha \rangle \neq0$. But from \eqref
{E:affineaction} it
is easy to see that one of the hyperplanes $H_\alpha^k$ was crossed
going from $A$ to $\tA$. It follows that the region $B$ is not bounded
in the $\alpha$ direction. The hyperplane $H$ must thus be $H_\alpha^0$
or $H_\alpha^1$. In either case, it separates $A$ from $A^\circ$,
contradicting the assumption $A \lessdot A'$.

So if $H$ belongs to the Shi arrangement, we conclude that $H = \tH$,
and that $\tA'$ and $A'$ belong to the same region $B'$ of the Shi
arrangement. Since $H$ separates $A'$ from $A^\circ$, and we also have
$\tA\lessdot\tA'$.

Finally, suppose that $H$ does not belong to the Shi arrangment. Then
$A, A', \tA,   \tA'$ all belong to the same region $B$, and are all
separated from $A^\circ$ by some $H_\alpha^0$ or $H_\alpha^1$ parallel
to $H$. In this case, the claim is clear.
\end{pf}

Denote by $B_w$ the unique region of the Shi arrangement that is a
translation of the Weyl chamber $C_w$. Let $\Gamma'$ be the graph
obtained from $\Gamma$ by removing $\{(B_v,u) \mid v,u \in W\}$. Let
$M$ be the transition matrix of $\Gamma$ and let $M'$ be its
restriction to $\Gamma'$. Let $\pp^w$ be the vector with components
labeled by vertices of $\Gamma'$, given by $\pp^w_{(B,v)}=\sum_{u
\in
W} \Prob((B,v) \to(B_w,u))$. Note that for each $(B,v)$, there is at
most one $u \in W$ for which the probability $\Prob((B,v) \to(B_w,u))$
is nonzero.

Let $\varepsilon_{(B,w)}$ denote the unit vector corresponding to a vertex
of $\Gamma$, and $\langle\cdot,\cdot \rangle$ denote the natural inner
product on the
vertex space spanned by vertices of~$\Gamma$.

%th4 #&#
\begin{thmm}\label{T:Shi}
For each $w \in W$,
\[
\zeta\bigl(w^{-1}w_0\bigr) = \Prob(X \in C_w)
= \bigl\langle\bigl(I-M'\bigr)^{-1} \cdot \varepsilon
_{(A^\circ,1)}, \pp^w \bigr\rangle.
\]
\end{thmm}

\begin{pf}
Lemma~\ref{L:Shi} guarantees that the Markov chain $X =
(X_0,X_1,\ldots
)$ projects to a Markov chain on $\Gamma$ via $x = vt_\lambda\mapsto
(B, v)$ where the alcove $A_x$ lies in the region $B$. Thus, the
probability $\Prob(X \in C_w)$ we desire is equal to the probability
that a random walk in $\Gamma$ starting from $(A^\circ,1)$, with
transition matrix $M$, eventually ends up at one of the vertices
$(B_w,v)$. This immediately gives the stated formula, assuming that
$(I-M')^{-1}$ is invertible, and is equal to $I + M' + (M')^2 + \cdots.$

Let $B$ be a region of the Shi arrangement which lies between two
parallel hyperplanes $H_\alpha^0$ and $H_\alpha^1$. Then for each $A
\in B$, there is some $A' \succ A$ outside of $B$. It follows that the
random walk $(X_0,X_1,\ldots)$ has probability 0 of staying in a region
of the Shi arrangement other than one of the $B_w$'s. Thus, $I-M'$ must
be invertible, $M'$ must be strictly substochastic, and $I + M' +
(M')^2 + \cdots= I-M'$.
\end{pf}

%f4 #&#
\begin{figure}

\includegraphics{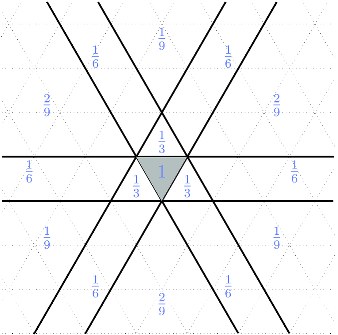}

\caption{Probabilities that $X$ passes through each region of the Shi
arrangement of $\tilde A_2$. The probabilities of the (translated) Weyl
chambers should be compared with Figure \protect\ref{fig:S3}, illustrating
Theorems \protect\ref{T:main2} and \protect\ref{T:Shi}.}
\label{fig:Shi}
\end{figure}

Theorem \ref{T:Shi} is illustrated in Figure \ref{fig:Shi}.

%s5 #&#
\section{$n$-cores, periodic TASEP and the connection to symmetric functions}
\label{sec:cores}
%s5.1 #&#
\subsection{$n$-cores and affine Grassmannian permutations}
In this section, we suppose $W = S_n$ is the symmetric group. We assume
basic familiarity with Young diagrams. Recall that a skew Young diagram
$\lambda/\mu$ is a ribbon if it is edge-connected and does not contain
any $2 \times2$ square. A Young diagram $\lambda$ is called an \textit{$n$-core} if no ribbons of size $n$ can be removed from it (and still
leaving a Young diagram).

The set of $n$-cores can be built from the empty partition by the
following procedure. Take an $n$-core $\lambda$, and suppose $b$ is an
addable-corner of $\lambda$ on diagonal $d$. Then the Young diagram
obtained from $\lambda$ by adding all addable-corners on diagonals $d'$
satisfying $d' \equiv d \mod n$, is also an $n$-core, and recursively
one obtains every $n$-core in this way.
Figure~\ref{fig:3} shows the start of the $3$-core graph, where the
edges denote the above box adding operation. The $3$-core graph is the
one-skeleton of a hexagonal planar tiling. The following result is well
known; see \cite{LLMS}.

%f5 #&#
\begin{figure}[b]

\includegraphics{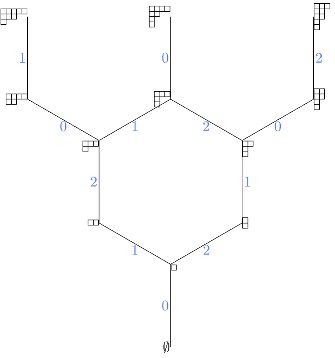}

\caption{The graph of $3$-cores, with edges labeled by the
corresponding simple generator. Note that $3$-cores on the same level
do not have the same number of
boxes.}
\label{fig:3}
\end{figure}

%\begin{figure}
%%
%\begin{center}
%\includegraphics{f05.eps}
%% \begin{tikzpicture}
%% \tikzstyle{every node}=[minimum size=0pt,
%% inner sep=0pt]
%% \draw(0,0) node (0) [label=left:$\varnothing$] {}
%% -- node [left=1pt,blue] {$0$} ++(90:2cm) node (1) [label=below
%right:$
%%\tabll{{}}$]{}
%% -- node [below=1pt,blue] {$1$} ++(150:2cm) node (2) [label=left:$
%%\tabll{{}&{}}$]{}
%% -- node [left=1pt,blue] {$2$} ++(90:2cm) node (31) [label=below
%left:$
%%\tabll{{}&{}&{}\\{}}$]{}
%% -- node [below=1pt,blue] {$0$}++(150:2cm) node (42) [label=left:$
%%\tabll{{}&{}&{}&{}\\{}&{}}$]{}
%% -- node [left=1pt,blue] {$1$}++(90:2cm) node (531) [label=left:$
%%\tabll{{}&{}&{}&{}&{}\\{}&{}&{}\\{}}$]{};
%%
%% \draw(1)
%% -- node [below=1pt,blue] {$2$}++(30:2cm) node (11) [label=right:$
%%\tabll{{}\\{}}$]{}
%% -- node [right=1pt,blue] {$1$}++(90:2cm) node (211) [label=below
%%right:$\tabll{{}&{}\\{}\\{}}$]{}
%% -- node [below=1pt,blue] {$0$}++(30:2cm) node (2211) [label=right:$
%%\tabll{{}&{}\\{}&{}\\{}\\{}}$]{}
%% -- node [right=1pt,blue] {$2$}++(90:2cm) node (32211) [label=right:$
%%\tabll{{}&{}&{}\\{}&{}\\{}&{}\\{}\\{}}$]{};
%%
%% \draw(211)
%% -- node [below=1pt,blue] {$2$}++(150:2cm) node (311) [label=left:$
%%\tabll{{}&{}&{}\\{}\\{}}$]{}
%% --node [below=1pt,blue] {$1$} (31);
%%
%% \draw(311)
%% -- node [left=1pt,blue] {$0$}++(90:2cm) node (4211)
%% [label=left:$\tabll{{}&{}&{}&{}\\{}&{}\\{}\\{}}$]{};
%%\end{tikzpicture}
%\end{center}
%%
%
%\end{figure}

%pr3 #&#
\begin{proposition}\label{P:bij}
There is a natural bijection between $n$-cores and the affine
Grassmannian elements of $\tS_n$. The edges of the $n$-core graph
correspond to left-multiplication by simple generators.
\end{proposition}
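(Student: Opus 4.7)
The plan is to realize both sides of the bijection via the abacus / bead model of partitions. First I would encode a partition $\lambda$ by its set of first-column beta-numbers $B_\lambda \subset \Z$ in the standard way, so that $B_\lambda$ differs from $\Z_{<0}$ only in a finite set, with the added elements determined by $\lambda_i - i$. Grouping $\Z$ into $n$ runners by residue modulo $n$, the classical characterization is: $\lambda$ is an $n$-core if and only if each runner is \emph{flush}, i.e.\ $k \in B_\lambda$ implies $k-n \in B_\lambda$ on that runner. This is because a removable $n$-ribbon corresponds exactly to sliding a single bead on one runner from position $k$ down to position $k-n$ (with all other beads fixed), so the absence of any such slide characterizes the $n$-core condition.

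For such a flush abacus, the data of $B_\lambda$ reduces to a vector $(c_0,\ldots,c_{n-1}) \in \Z^n$ with $\sum c_i = 0$ recording the top occupied position on each runner relative to the empty-partition baseline. This sets up a bijection between $n$-cores and the coroot lattice $Q^\vee$ of $\tilde A_{n-1}$. Using the isomorphism $\tS_n \cong S_n \ltimes Q^\vee$, I would associate to $\lambda$ the element $x_\lambda = w_\lambda\, t_{\lambda^\vee}$, where $\lambda^\vee$ is the anti-dominant $S_n$-translate of $(c_0,\ldots,c_{n-1})$ and $w_\lambda \in S_n$ is the unique minimal-length permutation moving $\lambda^\vee$ back to the original offset vector. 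Lemma \ref{lem:Grass} guarantees that the resulting $x_\lambda$ is affine Grassmannian, and conversely that every Grassmannian element arises uniquely in this way, giving the bijection.

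For the edge statement, I would check directly on the abacus that adding all addable boxes of residue $i$ corresponds to swapping runners $i-1$ and $i$ at their interface, i.e.\ exchanging adjacent filled/empty pairs across those runners; this is precisely the action of left multiplication by $s_i$ on the bead sequence viewed as an element of $\tS_n$. The generator $s_0$ handles the wrap-around between the top and bottom runners and is compatible with the convention $\alpha_0 = \delta - \theta$ via the identification $s_0 = r_\theta t_{-\theta^\vee}$ already used in Lemma \ref{L:s0}. Since the $n$-core graph edges, by construction, are in bijection with pairs (core, addable residue class), and since left-multiplication by $s_i$ sends a Grassmannian element to a longer Grassmannian element exactly when such an addition is possible, we obtain the claimed labeled correspondence.

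The main technical hurdle is the $s_0$ case: verifying that the wrap-around abacus swap corresponds to the affine simple reflection with the correct translation $-\theta^\vee$, rather than some off-by-one version. The other generators $s_1, \ldots, s_{n-1}$ act on runners in exactly the same way they act on positions in the finite symmetric group, so their compatibility is essentially automatic. Once the $s_0$ bookkeeping matches, the proposition reduces to comparing the two recursive descriptions of the poset.
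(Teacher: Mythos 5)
Your approach is correct, but note that the paper itself offers no proof of this proposition: it is stated as ``well-known'' with a pointer to \cite{LLMS}, so you are supplying an argument where the author supplies a citation. The abacus/beta-number proof you sketch is exactly the standard one (and is essentially what appears in the cited reference): flush runners characterize $n$-cores, the runner offsets $(c_0,\ldots,c_{n-1})$ with $\sum c_i = 0$ identify cores with $Q^\vee$, and $Q^\vee$ indexes the cosets $\tS_n/S_n$ whose minimal-length representatives are the affine Grassmannian elements, with Lemma \ref{lem:Grass} pinning down the representative $w_\lambda t_{\lambda^\vee}$ as you describe. Your identification of the edge labels is also sound, though the phrase ``exactly when such an addition is possible'' hides the one genuinely core-specific fact you should make explicit: for an $n$-core and a fixed residue $i$, the addable and removable boxes of residue $i$ never coexist, and when neither exists $s_i x$ lies in the same coset $xS_n$ (so it is longer but not Grassmannian); this trichotomy is what makes the labeled graph match left multiplication on Grassmannian elements. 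You are also right that the only real bookkeeping risk is the $s_0$ wrap-around and the left-versus-right multiplication convention (the box-adding operation acts on bead \emph{positions}, i.e., on the values of the affine permutation, which is post-composition and hence left multiplication in the paper's conventions). With those two points checked, your proof is complete and self-contained, which is arguably more than the paper provides.
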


In the following, we use the standard coordinates for $Q^\vee$, so that
$\alpha_i^\vee= e_i - e_{i+1}$.

%le9 #&#
\begin{lemma}\label{L:slope}
Let $\mu= (\mu_1,\mu_2,\ldots,\mu_n) \in Q^\vee$ be an antidominant
element of the coroot lattice. Then the $n$-core of the translation
element $t_{(\mu_1,\mu_2,\ldots,\mu_n)}$ has slope $(n-i)/i$ between
diagonals $n\mu_i+i-2$ and $n \mu_{i+1} + i-2$, for $i = 1,2,\ldots,n-1$.\footnote{The slope should be calculated between the points of
intersection of the boundary of the core, and the diagonals, but for
our asymptotic purposes this is not important.}
\end{lemma}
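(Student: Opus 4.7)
The plan is to work via the standard abacus ($\beta$-set) description of $n$-cores. Under the bijection of Proposition~\ref{P:bij}, the $n$-core corresponding to $t_\mu$ has a $\beta$-set decomposing by residue mod $n$ into $n$ ``runners'', each with a bead-count threshold $N_r$. A direct check identifies $N_r = \mu_{r+1} + C$ for a uniform normalization constant $C$, so antidominance of $\mu$ translates to $N_0 \le N_1 \le \cdots \le N_{n-1}$.

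The lemma then reduces to a density count. Writing $p = nk + r$ with $r \in \{0, 1, \dots, n-1\}$, the $\{0,1\}$-sequence $\epsilon_p$ (with $1$ at $\beta$-positions) satisfies $\epsilon_p = 1$ iff $k < N_r$. For positions at level $k \in [\mu_i + C,\, \mu_{i+1} + C)$, exactly $i$ of the $n$ runners have passed their threshold while $n - i$ have not, so each window of $n$ consecutive positions contains $i$ gaps and $n - i$ beads. Interpreting beads as vertical boundary steps and gaps as horizontal ones, the boundary has constant local slope $(n - i)/i$ on this range, corresponding (via a position-to-diagonal shift $d = p - L$ with $L = nC$) to the diagonal interval $[n\mu_i,\, n\mu_{i+1})$.

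The final step is to convert to the precise diagonal coordinates in the lemma's statement. The offset $i-2$ appears because the slope is measured (per the footnote) between the boundary's intersections with the diagonal lines $d = n\mu_i + i - 2$ and $d = n\mu_{i+1} + i - 2$, points lying on a linear extrapolation of the slope-$(n-i)/i$ segment rather than at its immediate endpoints. A direct calculation in a small example pins down the constant: for instance, $n = 3$ with $\mu = (-1, 0, 1)$ gives the $3$-core $(3, 1, 1)$, whose boundary meets the diagonal lines $d = -4, -1, 0, 3$ at the points $(0, -4), (1, -2), (1, -1), (3, 0)$, yielding line segments of slopes $2$ and $1/2$ respectively.

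The main obstacle is purely bookkeeping, specifically the alignment of three conventions: (i) the bijection between runner indices and coordinates of $\mu$, (ii) the shift converting $\beta$-positions to diagonals on the Young diagram, and (iii) the specific extrapolation points used to measure the slope. The conceptual content---that an antidominant $\mu$ produces a monotone staircase of runner occupancies, and hence the monotone sequence of slopes $(n-i)/i$ claimed---is immediate from the abacus picture.
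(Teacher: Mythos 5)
Your approach is sound and, unlike the paper, actually constitutes a proof: the paper disposes of this lemma with a one-line citation to \cite[Proposition 8.10]{LLMS}, whereas you rederive it from the abacus model. The core of your argument is correct. The $n$-core condition is exactly the statement that each of the $n$ runners of the $\beta$-set is a full initial segment below a threshold $N_r$, translation elements correspond to threshold vectors differing from the balanced one by $\mu$, and antidominance makes the thresholds monotone; the density count ($n-i$ beads and $i$ gaps per window of $n$ consecutive positions when $k$ lies between the $i$-th and $(i+1)$-st thresholds) then gives local slope $(n-i)/i$ on the corresponding diagonal range. Your $n=3$, $\mu=(-1,0,1)$ check against the core $(3,1,1)$ is correct and does confirm the offsets in that case. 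What your writeup buys is a self-contained, elementary argument that also makes transparent \emph{why} antidominance produces a monotone staircase of slopes; what the citation buys is that \cite{LLMS} has already fixed all the conventions consistently with Proposition \ref{P:bij}. The one thing I would ask you to tighten is the last step: a single example does not pin down the affine-in-$i$ offset $i-2$ (several formulas agree at $n=3$, $i\in\{1,2\}$), so you should either carry out the position-to-diagonal shift $d=p-L$ symbolically for general $n$ --- which your setup already makes routine --- or at least verify a case with $n\geq 4$. This is a presentational gap rather than a conceptual one; the argument as structured does close it.
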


\begin{pf}
Follows from \cite{LLMS}, Proposition~8.10.
\end{pf}

The $4$-core in Figure~\ref{fig:core} corresponds to $(-7,-2,3,6) \in
Q^\vee$.
%\begin{example}
%\end{example}

%However the $3$-boundary, obtained by removing all boxes
%with hook lengths greater than $3$, of all $3$-cores on the same
%level have the same number of boxes:
%
%$$
%\tableau[sY]{\bl&\bl&\bl&&\\\bl&&\\{}} \ \ \ \ \
%\tableau[sY]{\bl&\bl&&\\\bl&\\ \\{}} \ \ \ \ \ \tableau[sY]{\bl&\bl&\\
%\bl& \\ \bl& \\ \\{}}
%$$

%s5.2 #&#
\subsection{The shape of a random $n$-core}\label{sec:randomcurve}

By a random $n$-core we will mean an $n$-core generated by applying the
bijection in Proposition~\ref{P:bij} to the Markov chain $Y$ described
in Section~\ref{ss:random}. If $\lambda$ is a $n$-core, then we let
$D(\lambda)$ denote the curve drawing out the lower-right boundary of
$\lambda$, scaled by the degree $\deg(\lambda)$ in both directions.
Here, the degree is the length of the corresponding affine Grassmannian
element from Proposition~\ref{P:bij}, or equivalently, the distance
from the empty partition in the $n$-core graph. By convention,
$D(\lambda)$ includes a vertical ray going to $-\infty$ along the
$y$-axis, and a horizontal ray going to $+\infty$ along the $x$-axis.
Given two curves $D, D'$ of this form, we write $|D-D'|$ to denote the
supremum of the distance between $D$ and $D'$, measured along the
diagonals $y = -x + k$. With this notation, Corollary~\ref{C:main}
combined with Lemma~\ref{L:slope} translates to Theorem~\ref{T:cores}.

%\begin{theorem}\label{T:core}
%There exists a piecewise-linear curve $C = C_n$, so that for each $
%\varepsilon,\delta> 0$, there exists an $N$
%$$
%\Prob(|D(\lambda^{N}) -C| > \delta) < \varepsilon
%$$
%where $\lambda^{(N)}$ is a random $n$-core of degree $N$.
%\end{theorem}

Let us use Conjecture \ref{conj:A} to predict the piecewise-linear
curve $C_n$ of Theorem~\ref{T:cores}. Let $\mu$ be an antidominant
element of $Q^\vee$ satisfying $\mu_2-\mu_1 = \mu_3 - \mu_2 =
\cdots=
\mu_n - \mu_{n-1} = A$ (i.e., $\mu$ is in the same direction as
$\rho
$). To calculate the correct scaling we use Lemma~\ref{lem:Grass} which
says that
$ \ell(t_\mu) = \sum_{1\leq i <j \leq n} \mu_j - \mu_i = A/\alpha$,
where $\alpha= \frac{6}{(n-1)n(n+1)}$.

Now consider the piecewise-linear curve $C_\rho$ which successively
connects the points
\begin{eqnarray*}
&&(0,-\infty),\qquad\biggl(0,-\frac{n(n-1)}{2} \alpha\biggr), \qquad\bigl(\alpha, -(1+2+
\cdots +n-2)\alpha\bigr), \\
&&\qquad\bigl((1+2)\alpha, -(1+2+\cdots+n-3)\alpha\bigr), \qquad\ldots,
\\
&&\qquad\bigl((1+2+\cdots+n-2)\alpha,-\alpha\bigr),\qquad\biggl(\frac{n(n-1)}{2} \alpha,0
\biggr),\qquad (\infty,0).
\end{eqnarray*}
Using Lemma~\ref{L:slope}, one calculates that the core $\lambda$
corresponding to $t_{\mu}$ has diagram $D(\lambda)$ extremely close to
$C_n$: namely, it passes through the specified points but may not be
linear in between those points. Thus, we have the following proposition.

%pr4 #&#
\begin{proposition}
Assuming Conjecture \ref{conj:A}, the curve $C_n$ of Theorem~\ref
{T:cores} is~$C_\rho$.
\end{proposition}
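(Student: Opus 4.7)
The plan is to combine Corollary \ref{C:main} with Lemmas \ref{L:slope} and \ref{lem:Grass}, invoking Conjecture \ref{conj:A} to identify the asymptotic direction of $-\lambda^{(N)}$ with $\rho$. Via the bijection of Proposition \ref{P:bij}, a random $n$-core of degree $N$ corresponds to the random affine Grassmannian walk $Y_N = A_{w_N t_{\lambda^{(N)}}}$. The proof of Theorem \ref{T:main} shows that $-\lambda^{(N)}$, normalized, converges in probability to $\psi$; under Conjecture \ref{conj:A} this is a positive multiple of $\rho$, so for large $N$, $-\lambda^{(N)}$ is close (relative to its norm) to an anti-dominant lattice vector $\mu \in Q^\vee$ whose consecutive gaps $A = \mu_{i+1} - \mu_i$ are all equal.

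Next I would plug such a $\mu$ into Lemma \ref{L:slope} to read off the piecewise-linear boundary of the corresponding $n$-core: the slope equals $(n-i)/i$ on the strip between the diagonals $y = -x + n\mu_i + i - 2$ and $y = -x + n\mu_{i+1} + i - 2$, so the break-points lie, up to an $O(1)$ diagonal offset, on the diagonals $y = -x + n\mu_i$. Lemma \ref{lem:Grass} computes the degree,
$$\deg(\lambda) = \ell(t_\mu) = -\ip{\mu, 2\rho} = \sum_{1 \le i < j \le n}(\mu_j - \mu_i) = A\binom{n+1}{3} = A/\alpha,$$
with $\alpha = 6/((n-1)n(n+1))$. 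Rescaling both coordinates by $1/\deg(\lambda) = \alpha/A$, the $i$-th break-point is sent to the point $(\binom{i}{2}\alpha, -\binom{n-i+1}{2}\alpha)$, plus a vanishing perturbation. This matches the vertex list in the definition of $C_\rho$ exactly, and the slopes $(n-i)/i$ on consecutive strips agree with the slopes of the segments between consecutive $C_\rho$-vertices.

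Finally, Corollary \ref{C:main} (transported to the core side via Proposition \ref{P:bij}) upgrades the almost-sure asymptotic direction statement to convergence in probability of $D(\lambda^{(N)})$ to the curve so constructed, in the supremum-along-diagonals metric; by uniqueness of the limit in Theorem \ref{T:cores}, $C = C_\rho$. The main subtlety is error-control bookkeeping: the $i-2$ diagonal offsets from Lemma \ref{L:slope}, the deviation of $-\lambda^{(N)}$ from the nearest scalar multiple of $\rho$ lifted to $Q^\vee$, and the discretization present in the definition of $D(\lambda)$ each perturb the absolute break-point positions by $O(1)$, hence by $O(1/\deg(\lambda)) \to 0$ after rescaling, so none of them affects the limit curve.
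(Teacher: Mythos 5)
Your argument is correct and follows essentially the same route as the paper: identify $\psi$ with a positive multiple of $\rho$ via Conjecture \ref{conj:A}, compute the scaling $\ell(t_\mu)=A/\alpha$ from Lemma \ref{lem:Grass}, read off the rescaled break-points $(\binom{i}{2}\alpha,-\binom{n-i+1}{2}\alpha)$ and slopes $(n-i)/i$ from Lemma \ref{L:slope}, and transport Corollary \ref{C:main} through Proposition \ref{P:bij}. One small imprecision: the deviation of $-\lambda^{(N)}$ from the ray through $\rho$ is only $o(\deg(\lambda^{(N)}))$ (from convergence of directions), not $O(1)$, but this still vanishes after rescaling, so the conclusion is unaffected.
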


This proposition allows us to make some predictions, for example, of
the length of the first row of a random $n$-core. This might be
compared to corresponding results for random partitions (see, e.g.,
\cite{LoSh,VK}).

%co2 #&#
\begin{cor}
Assuming Conjecture \ref{conj:A}, the expected length of the first row
of a random $n$-core of degree $d$ is asymptotic to $\frac{3d}{n+1}$.
\end{cor}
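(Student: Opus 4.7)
The plan is to combine the preceding proposition — which, assuming Conjecture \ref{conj:A}, identifies the limit shape of Theorem \ref{T:cores} as the explicit piecewise-linear curve $C_\rho$ — with a direct reading of the first-row length off that curve, and then to upgrade the resulting convergence in probability to convergence of expectations.

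First I would observe that the length of the first row of a partition $\lambda$ is, by the conventions introduced before Theorem \ref{T:cores}, the $x$-coordinate of the point where the top-right piece of $D(\lambda)$ meets the horizontal ray along $y=0$; after the rescaling by the degree $N$ built into $D(\cdot)$, this coordinate is $\lambda_1^{(N)}/N$. From the explicit list of vertices of $C_\rho$ just before the preceding proposition, the relevant vertex of $C_\rho$ is $\bigl(\tfrac{n(n-1)}{2}\alpha,\,0\bigr)$, with $\alpha = \tfrac{6}{(n-1)n(n+1)}$. A one-line simplification gives
$$
\frac{n(n-1)}{2}\,\alpha \;=\; \frac{3}{n+1},
$$
so Theorem \ref{T:cores} together with the preceding proposition yields $\lambda_1^{(N)}/N \to \tfrac{3}{n+1}$ in probability.

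To turn this into the statement about expectations I would establish a deterministic a priori bound $\lambda_1^{(N)} \le C_n \cdot N$ valid for every $n$-core of degree $N$, with $C_n$ depending only on $n$. Such a bound follows from Lemmas \ref{lem:Grass} and \ref{L:slope}: writing the associated affine Grassmannian element as $x = w t_\mu$ with $\mu$ anti-dominant, the first row length equals $n\mu_n$ up to an additive constant, while $\ell(x) = -\langle\mu, 2\rho\rangle - \ell(w)$ dominates $|\mu_n|$ up to a factor depending only on $n$. The sequence $\lambda_1^{(N)}/N$ is then uniformly bounded, so bounded convergence upgrades convergence in probability to convergence of expectations, yielding $\mathbb{E}[\lambda_1^{(N)}] \sim \tfrac{3N}{n+1}$. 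The main obstacle, modest in this case, is bookkeeping: confirming that the $O(1)$ additive constants from Lemma \ref{L:slope} and the contribution of the finite Weyl group factor $w$ do not spoil the leading asymptotic.
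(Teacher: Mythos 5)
Your proposal is correct and matches the paper's (implicit) argument: the corollary is read off directly from the vertex $\bigl(\tfrac{n(n-1)}{2}\alpha,0\bigr)$ of $C_\rho$ in the preceding proposition, with $\tfrac{n(n-1)}{2}\alpha=\tfrac{3}{n+1}$. Your extra step upgrading convergence in probability to convergence of expectations via a uniform bound is care the paper leaves implicit (and can be simplified: each step in the $n$-core graph adds at most one box to the first row, so $\lambda_1^{(N)}\le N$ deterministically and bounded convergence applies at once).
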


%In the case of our limit shape $C$, we would scale by the area between
%$C$ and the axes, which is
%$$
%\mathrm{area}(C) = \alpha^2\left((n-1)^2 + 2(n-2)^2 + \cdots+(n-1)1^2
%\right) = \frac{n^2(n^2-1)\alpha^2}{12}.
%$$
%Scaling by $\sqrt{\mathrm{area}(C)}$ in both directions to get a diagram
%with area 1, the $x$-intercept would become $\frac{\sqrt{3}(n-1)}{
%\sqrt{n^2-1}}$, which should be compared with the the result of $2$ in
%the case of Vershik and Kerov. This value $2$ is also the limit $
%\lim_{n \to\infty} E(n)/\sqrt{n}$ of the expected length $E(n)$ of
%the longest increasing subsequence of a uniform random permutation in
%$S_n$, divided by $\sqrt{n}$ \cite{LoSh,VK}.
%
%In particular, a random $n$-core is ``fatter'' than a
%Plancherel-random partition.

The area between $C_\rho$ and the axes is equal to
\[
%\operatorname{area}(C_\rho) = \alpha^2 \bigl((n-1)^2 +
%2(n-2)^2 + \cdots +(n-1)1^2 \bigr) = \frac{n^2(n^2-1)\alpha^2}{12}.
\operatorname{area}(C_\rho) = \frac{1}{2} \alpha^2\bigl ((n-1)^2 + 2(n-2)^2 + \cdots
+ (n-1)1^2\bigr) = \frac{n^2(n^2-1)\alpha^2}{24}.
\]
If we scale the limit shape so that this area is normalized to $1$,
then the $x$-intercept of $C_\rho$ would become $\frac{\sqrt {6}(n-1)}{\sqrt{n^2-1}}$.

%co3 #&#
\begin{cor}\label{cor:firstrow}
Assuming Conjecture \ref{conj:A}, the first row of a large random
$n$-core is asymptotic to $\frac{\sqrt{6}(n-1)}{\sqrt{n^2-1}}\sqrt{N}$,
where $N$ is the number of boxes in the $n$-core.
\end{cor}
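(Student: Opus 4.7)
The plan is to reduce the statement to the previous corollary, which says that the first row of a random $n$-core of degree $d$ is asymptotic to $\frac{3d}{n+1}$, by relating the (random) degree $d$ of the core to the (random) box count $N$. Since $D(\lambda)$ is defined as the lower-right boundary scaled by $d = \deg(\lambda)$ in both coordinate directions, the first row has length $d \cdot x_d$ and the number of boxes equals $d^2 \cdot a_d$, where $x_d$ denotes the $x$-intercept of $D(\lambda)$ and $a_d$ the area enclosed between $D(\lambda)$ and the axes. Taking the ratio, $(\text{first row})/\sqrt{N} = x_d/\sqrt{a_d}$, and the claim becomes a statement about limits of these purely geometric quantities.

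Next I would invoke Theorem \ref{T:cores}, applied under the assumption of Conjecture \ref{conj:A}, which identifies the limiting curve as $C_\rho$. Convergence of $D(\lambda^{(N)})$ to $C_\rho$ in the $|\cdot|$ metric (measured along anti-diagonals) implies convergence in probability of the two continuous functionals we care about: the $x$-intercept $x_d \to \frac{n(n-1)}{2}\alpha = \frac{3}{n+1}$ (read directly off the explicit vertices of $C_\rho$), and the enclosed area
\[
a_d \;\longrightarrow\; \mathrm{area}(C_\rho) \;=\; \frac{n^2(n^2-1)\alpha^2}{12}
\;=\; \frac{n^2(n^2-1)}{12} \cdot \frac{36}{(n-1)^2 n^2 (n+1)^2} \;=\; \frac{3}{n^2-1},
\]
using $\alpha = \frac{6}{(n-1)n(n+1)}$.

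Substituting these limits gives
\[
\frac{\text{first row}}{\sqrt{N}} \;\longrightarrow\; \frac{3/(n+1)}{\sqrt{3/(n^2-1)}} \;=\; \frac{\sqrt{3(n^2-1)}}{n+1} \;=\; \frac{\sqrt{3}\,\sqrt{n^2-1}}{n+1},
\]
and the elementary identity $\sqrt{n^2-1}/(n+1) = (n-1)/\sqrt{n^2-1}$ rewrites this as the asserted constant $\frac{\sqrt{3}(n-1)}{\sqrt{n^2-1}}$.

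The only nonroutine point is the probabilistic step: transferring convergence of the boundary curve into simultaneous convergence of $x_d$ and $a_d$. Because $C_\rho$ is a bounded piecewise-linear curve and our approximants converge uniformly along anti-diagonals, both the $x$-intercept and the enclosed area are continuous functionals of the boundary (up to controllable contributions from the tails, which concern empty regions once $|D(\lambda)-C_\rho|$ is small). This continuity is the main technical obstacle, but it is mild: once established, the stated asymptotic follows immediately from the computation above.
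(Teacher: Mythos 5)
Your proposal is correct and follows essentially the same route as the paper: the paper likewise computes ${\rm area}(C_\rho) = \frac{n^2(n^2-1)\alpha^2}{12}$ and rescales the $x$-intercept $\frac{n(n-1)}{2}\alpha$ by $\sqrt{{\rm area}(C_\rho)}$ to obtain $\frac{\sqrt{3}(n-1)}{\sqrt{n^2-1}}$, which is exactly your ratio $x_d/\sqrt{a_d}$ in the limit. Your added remark on why convergence of the boundary curve in the anti-diagonal metric yields convergence of the $x$-intercept and enclosed area is a point the paper leaves implicit, and your justification of it is adequate.
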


%s5.3 #&#
\subsection{Periodic TASEP}
There is a well-known correspondence between growth models on Young
diagrams, and the totally asymmetric exclusion process (TASEP). The
random growth model on $n$-cores we have described gives rise to a
periodic analogue of TASEP that we now describe.

Let $\sigma= (\sigma_i \in\{0,1\} \mid i \in\Z)$ be a doubly
infinite sequence of $0$-s and $1$-s, labeled by the integers. The
sequence $\sigma$ is to be thought of as a sequence of balls and empty
spaces: $\sigma_i = 0$ mean that position $i$ is empty, and $\sigma_i =
1$ means that position $i$ is occupied. There is a natural map $\lambda
\mapsto\sigma(\lambda)$, illustrated in Figure~\ref{fig:TASEP}. The
indexing is normalized so that $\sigma(\varnothing)$ is the step-function
satisfying $\sigma_i = 1$ for $i < 0$ and $\sigma_i = 0$ for $i \geq
0$. It is clear that adding a box to $\lambda$ corresponds to moving a
ball to an empty space immediately to its right.

%f6 #&#
\begin{figure}

\includegraphics{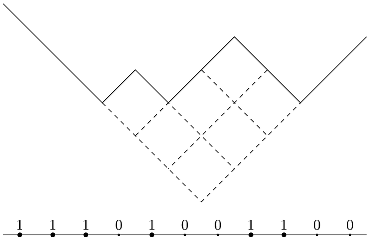}

\caption{The calculation of $\sigma((3,2,2))$ is illustrated here: we
first rotate the Young diagram 135 degrees counterclockwise, then we
draw the outline curve (illustrated in solid lines). Downward steps in
the outline curve corresponds to $1$-s, and upward steps correspond to $0$-s.}
\label{fig:TASEP}
\end{figure}

%\begin{figure}
%%
%\begin{center}
%\includegraphics{f06}
%%\begin{tikzpicture}[scale=0.8]
%%\draw(-0.5,0) -- (10.5,0);
%%\foreach\a in {0,1,2,3,4,5,6,7,8,9,10} {
%% \fill[black, opacity=0.5] (\a,0) circle (1pt);
%%}
%%
%%\fill[black] (0,0) circle (2pt);
%%\fill[black] (1,0) circle (2pt);
%%\fill[black] (2,0) circle (2pt);
%%\fill[black] (4,0) circle (2pt);
%%\fill[black] (7,0) circle (2pt);
%%\fill[black] (8,0) circle (2pt);
%%\node at (0,0.3) {$1$};
%%\node at (1,0.3) {$1$};
%%\node at (2,0.3) {$1$};
%%\node at (3,0.3) {$0$};
%%\node at (4,0.3) {$1$};
%%\node at (5,0.3) {$0$};
%%\node at (6,0.3) {$0$};
%%\node at (7,0.3) {$1$};
%%\node at (8,0.3) {$1$};
%%\node at (9,0.3) {$0$};
%%\node at (10,0.3) {$0$};
%%%\draw[->,color=blue] (0,0) to [bend left=45] (1,0.1);
%%%\draw[->,color=blue] (1,0) to [bend left=45] (2,0.1);
%%%\draw[->,color=blue] (2,0) to [bend left=45] (3,0.1);
%%%\draw[->,color=blue] (4,0) to [bend left=45] (5,0.1);
%%%\draw[->,color=blue] (7,0) to [bend left=45] (8,0.1);
%%%\draw[->,color=blue] (8,0) to [bend left=45] (9,0.1);
%%
%%%\node[fill=black,inner sep=1pt] at (1,0) {};
%%%\node[fill=white,inner sep=1pt] at (3,0) {};
%%
%%\begin{scope}[shift={(0.5,6)}]
%%\draw(-1,1) -- (0,0) -- (1,-1) -- (2,-2) -- (3,-1) -- (4,-2) --
%%(5,-1) -- (6,0) -- (7,-1) -- (8,-2) -- (9,-1) -- (10,0);
%%\draw[dashed] (2,-2) -- (5,-5) -- (8,-2);
%%\draw[dashed] (3,-3) -- (4,-2) -- (6,-4);
%%\draw[dashed] (5,-1) -- (7,-3);
%%\draw[dashed] (7,-1)-- (4,-4);
%%\end{scope}
%%\end{tikzpicture}
%
%\end{center}
%%
%\end{figure}

Suppose $\lambda$ is an $n$-core. Then $\sigma(\lambda)$ satisfies:
\begin{longlist}[(1)]
\item[(1)]
$\sigma_i = 1$ for $i \ll0$,
\item[(2)]
if $\sigma_i = 1$ then $\sigma_{i-n} = 1$ and
\item[(3)]
if $d_1,d_2,\ldots,d_n \in\Z$ are such that $\sigma_{d_j} = 1$ and
$\sigma_{d_j + n} = 0$, then we have $\sum_{j=1}^n d_j = -
{n+1\choose2}$,
\end{longlist}
and these conditions characterize the sequences that arise from
$n$-cores. Periodic TASEP is a random process on these sequences, given
by the rules:
\begin{longlist}
\item[(1)]
At time $t = 0$, we have $\sigma(0)$ is the step-function.
\item[(2)]
At each time $t$, an element $\bar i \in\Z/n\Z$ is chosen uniformly at
random, subject to the condition that there exists $i_0 \equiv\bar i
\mod n$ satisfying $\sigma_{i_0} = 1$ and $\sigma_{i_0+1} = 0$. We then
define $\sigma(t+1)$ by moving all balls at positions $i \equiv\bar i
\mod n$ one step to the right, if possible.
\end{longlist}
The conditioning implies that at each time step finitely many, but
nonzero number of balls are moved.

%pr5 #&#
\begin{proposition}
The random $n$-core process is transformed under $\lambda\mapsto
\sigma
(\lambda)$ to the periodic TASEP process.
\end{proposition}

When $n = \infty$, periodic TASEP becomes one of the standard discrete
time versions of the TASEP process. Namely, at each time $t$, one of
the balls that can be moved is chosen uniformly at random, and moved
one step to the right. The asymptotic behavior of TASEP is a very
well-studied problem. In particular, Rost~\cite{Ros} (see also
Johansson \cite{Joh}) has described the asymptotic shape of the result.
%
%f7 #&#
\begin{figure}

\includegraphics{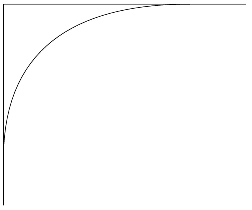}

\caption{The limiting curve $C$ for TASEP.}
\label{fig:TASEPC}
\end{figure}

We describe their result in terms of Young diagrams, and also rotated
so that Young diagrams are upper-left justified. As $t \to\infty$, the
Young diagram of this growth process, after suitable scaling,
approaches the limiting curve (see Figure~\ref{fig:TASEPC})
\begin{eqnarray*}
C& =& \bigl\{(x,0) \mid x \in[1,\infty)\bigr\} \cup\bigl\{(x,y) \in[0,1] \times
[-1,0] \mid\sqrt{x} + \sqrt{-y} = 1\bigr\}\\
&&{} \cup\bigl\{(y,0) \mid y \in [-1,-
\infty)\bigr\}.
\end{eqnarray*}

%\begin{figure}
%%
%\begin{center}
%\includegraphics{f07.eps}
%%\begin{tikzpicture}[scale=0.6]
%%
%%\begin{axis}[axis
%%%%lines*=middle,xlabel={},ylabel={},ticks=none,xmin=0,xmax=1.3,ymin=-1.3,ymax=0]
%% \addplot[domain=0:1,black,samples=200] {-1+2*sqrt(x)-x};
%%\end{axis}
%%
%%\end{tikzpicture}
%
%\end{center}
%%
%\end{figure}

It is not hard to see that after a suitable scaling, the
piecewise-linear curves $C_\rho$ of Section~\ref{sec:randomcurve}
approaches $C$ pointwise, as $n \to\infty$.

%s5.4 #&#
\subsection{Plancherel measure for $n$-cores}\label{ss:plancherel}
This work was motivated by the connections to a family $\tF_x(X)$ of
symmetric functions labeled by $x \in\tS_n$, known as \emph{affine
Stanley symmetric functions} \cite{LamAS} (and also a closely related
family $\tG_x(X)$ called the affine stable Grothendieck polynomials
\cite{LSS}). The coefficient $[m_{1^{\ell(x)}}] \tF_x(X)$ of the
square-free monomial in $\tF_x$ is equal to the number of reduced words
of~$x$. Whereas Stanley's seminal work \cite{Sta} studies \textit{exact}
formulae for the number of reduced words, our approach looks for \textit{asymptotic} formulae. The symmetric functions $\tF_x$ plays the same
role for affine permutations, namely, a generating function for
``semi-standard'' objects, as the Schur functions $s_\lambda$ play for
Grassmannian permutations. Schur functions play a crucial role in the
study of random partitions; see, for example, \cite{Oko}.

The measure we obtain on the set $\{x \in\tS_n \mid\ell(x) = N\}$ of
affine permutations of length $N$ from our random walk is not the same
measure as the one obtained by letting $\Prob(x)$ be proportional to
the number of reduced words of $x$. Nevertheless, Corollary~\ref
{C:main} and Theorem~\ref{T:cores} still apply (see Remark~\ref
{rem:reduced_words}).

In \cite{LLMS}, we proved an enumerative identity
%
%e4 #&#
\begin{equation}
\label{E:nfact} m! = \sum_\lambda\#\{\mbox{weak tableaux
of shape $\lambda$}\} \cdot\# \{\mbox{strong tableaux of shape $\lambda$}\},
\end{equation}
where the sum is over $n$-cores of degree $m$. \textit{Weak tableaux}
count paths in the $n$-core graph. \textit{Strong tableaux} are defined in
terms of the strong (Bruhat) order. The terms on the right-hand side of
\eqref{E:nfact} would give the natural analogue of the Plancherel
measure for partitions. In \cite{LLMS}, a symmetric function
generalization of \eqref{E:nfact} is also given, and involves affine
Stanley symmetric functions and {\it$k$-Schur functions}. The identity
\eqref{E:nfact} is generalized to the Kac--Moody case in \cite{LS}.

%s5.5 #&#
\subsection{$K$-homology of the affine Grassmannian}\label{ss:Grass}
Recall from the proof of Lemma~\ref{L:reverse} in Section~\ref
{ss:probW} that the probabilities $\Prob(X_N = x)$ were given by the
coefficients $[T_x] \xi^N$ for an element $\xi\in K$. In the case $W =
S_n$, by \cite{LSS}, Corollary~7.5, the element $\xi$ can be
interpreted (up to a factor) as the divisor Schubert class in the
$K$-homology $K^*(\Gr_{\mathrm{SL}(n)})$ of the affine Grassmannian of $\mathrm{SL}(n)$.
The affine Grassmannian $\Gr_{\mathrm{SL}(n)}$ is an infinite-dimensional space
of central importance in representation theory. In the case of a
complex simple algebraic group with Weyl group $W$, the natural element
to consider from the point of view of the geometry of $\Gr_{G}$ is
\[
\xi' = \sum_{i=I \cup\{0\}} a_i^\vee
T_i,
\]
where the definition of the weights $a_i^\vee$ can be found in \cite
{Kac}; see \cite{LS}, Proposition~2.17, for an explanation of these
weights (the argument in \cite{LS} is for the homology case, but easily
extends to $K$-homology). Probabilistically, this amounts to
considering random walks where the allowable transitions are not taken
uniformly at random, but left multiplication by $s_i$ is weighted by
the $a_i^\vee$. Note that Theorem~\ref{T:main} and its proof still
remain valid in this situation.
See also Remark~\ref{rem:othertypes}.

%\begin{remark}\label{rem:othertypes2}
%The above discussion should also apply to other Weyl groups, but the
%corresponding random walk should be altered by some weights: namely,
%multiplication by a simple generator $s_i$ is not done uniformly at
%random, but instead are chosen with a weight depending only on $i$.
%The weights we expect to be relevant are denoted $a_i^\vee$ in
%\cite{Kac}; see \cite[Proposition~2.17]{LS} for the explanation of
%this. Note that Theorem~\ref{T:main} and its proof still remain valid
%in this situation.
%See also Remarks \ref{rem:othertypes} and \ref{rem:othertypes 3}.
%\end{remark}
%
%\begin{remark}
%In \cite{LamMB}, we will apply the geometry of the affine Grassmannian
%to a different probabilistic problem related to affine Weyl groups.
%\end{remark}
\section*{Acknowledgements}
We benefited from John Stembridge's \texttt{coxeter/weyl} Maple package.
We thank Jinho Baik and Alexei Borodin for helpful conversations
related to the totally assymetric exclusion process. We also thank an
anonymous referee for helpful suggestions.
We thank Arvind Ayyer and Svante Linusson for a correction.

% imsref loaded by akundreckaite, 2014-04-10 14:50:28
%

%
%\begin{appendix}
%\section{}
%\end{appendix}

% zodis "Acknowledgments" paliekamas pagal autoriu
%\section*{Acknowledgments}

%\begin{supplement}[id=suppA]
%\sname{Supplement A}
%\stitle{}
%\slink[doi]{10.1214/00-AOPXXXXSUPP} %[doi,text={...}] - jei reikia
%suskaldyti doi
%\sdatatype{.pdf}
%\sfilename{aopXXXX\_supp.pdf}
%\sdescription{}
%\end{supplement}

%\begin{thebibliography}{99}
%\bibitem[\protect\citeauthoryear{}{}]{r1}
%\bibitem{r1}
%\end{thebibliography}

\printaddresses

\end{document}